\documentclass[11pt,reqno]{amsart}

\usepackage[T1]{fontenc}
\usepackage[utf8]{inputenc}
\usepackage{amsmath,amsfonts,amsthm,amssymb,amsxtra,bbm}
\usepackage{setspace,graphicx,color,pdflscape}
\usepackage[colorlinks=true]{hyperref}
\hypersetup{urlcolor=blue, linkcolor=blue, citecolor=red, anchorcolor=blue]}
\usepackage{doi}


\setlength{\voffset}{-.7truein}
\setlength{\textheight}{8.8truein}
\setlength{\textwidth}{6.05truein}
\setlength{\hoffset}{-.7truein}

\newtheorem{theorem}{Theorem}
\newtheorem{proposition}[theorem]{Proposition}
\newtheorem{lemma}[theorem]{Lemma}

\newcommand{\R}{{\mathbb R}}

\newcommand{\be}[1]{\begin{equation}\label{#1}}
\newcommand{\ee}{\end{equation}}
\renewcommand{\(}{\left(}
\renewcommand{\)}{\right)}
\newcommand{\irtwo}[1]{\int_{\R^2}{#1}\,dx}
\newcommand{\ird}[1]{\int_{\R^d}{#1}\,dx}
\newcommand{\irtwomu}[1]{\int_{\R^2}{#1}\,d\mu}
\newcommand{\nrm}[2]{\left\|{#1}\right\|_{#2}}

\newcommand{\param}{\tau}

\makeatletter\@namedef{subjclassname@2020}{\textup{2020} Mathematics Subject Classification}\makeatother

\newcommand{\nc}{\normalcolor}

\title[Logarithmic estimates for $2$-dimensional mean-field models]{Logarithmic estimates for mean-field models in dimension two and the Schr\"odinger-Poisson system}

\author[J.~Dolbeault]{Jean Dolbeault}
\address{\hspace*{-12pt}J.~Dolbeault: CEREMADE (CNRS UMR n$^\circ$ 7534), PSL university, Universit\'e Paris-Dauphine\\ Place de Lattre de Tassigny, 75775 Paris 16, France}
\email{\href{mailto:dolbeaul@ceremade.dauphine.fr}{dolbeaul@ceremade.dauphine.fr}}

\author[R.L.~Frank]{Rupert L.~Frank}
\address{\hspace*{-12pt}R.L.~Frank: Mathematisches Institut, Ludwig-Maximilans Universit\"at M\"unchen, Theresienstr. 39, 80333 M\"unchen, Germany, and Munich Center for Quantum Science and Technology, Schellingstr. 4, 80799 M\"unchen, Germany, and Mathematics 253-37, Caltech, Pasadena, CA 91125, USA}
\email{\href{mailto:r.frank@lmu.de}{r.frank@lmu.de}}

\author[L.~Jeanjean]{Louis Jeanjean}
\address{\hspace*{-12pt}L.~Jeanjean: Laboratoire de Math\'ematiques (CNRS UMR n$^\circ$ 6623), Universit\'e of Bourgogne Franche-Comt\'e, 25030 Besan\c con Cedex, France}
\email{\href{mailto:louis.jeanjean@univ-fcomte.fr}{louis.jeanjean@univ-fcomte.fr}}

\begin{document}
\begin{abstract} In dimension two, we investigate a free energy and the ground state energy of the Schr\"odinger-Poisson system coupled with a logarithmic nonlinearity in terms of underlying functional inequalities which take into account the scaling invariances of the problem. Such a system can be considered as a nonlinear Schr\"odinger equation with a cubic but nonlocal Poisson nonlinearity, and a local logarithmic nonlinearity. Both cases of repulsive and attractive forces are considered. We also assume that there is an external potential with minimal growth at infinity, which turns out to have a logarithmic growth. Our estimates rely on new logarithmic interpolation inequalities which combine logarithmic Hardy-Littlewood-Sobolev and logarithmic Sobolev inequalities. The two-dimensional model appears as a limit case of more classical problems in higher dimensions. \end{abstract}
\keywords{Schr\"odinger-Poisson system; nonlinear Schr\"odinger equation; mean-field coupling; Poisson equation; Newton equation; interpolation; logarithmic Hardy-Littlewood-Sobolev inequality; logarithmic Sobolev inequality}
\subjclass[2020]{35J50; 35Q55; 35J47}
\date{\today. {\em File:} \textsf{\jobname.tex}}
\maketitle
\thispagestyle{empty}
\vspace*{-0.75cm}

\section{The Schr\"odinger-Poisson system with a local logarithmic nonlinearity}

The standard Schr\"odinger-Poisson (SP) system is a nonlinear Schr\"odinger equation with cubic but nonlocal nonlinearity. As for the nonlinear Schr\"odinger (NLS) equation with a local nonlinearity, scaling properties play a crucial role in the analysis of the solutions and depend on the dimension $d$ of the Euclidean space. The fact that the nonlinearity in (SP) involves the Poisson convolution kernel makes existence results easier to study than for (NLS) because of the compactness properties induced by the convolution, but adds difficulties due to the non-locality of the mean field potential. We consider primarily the case $d=2$.

Our purpose is to focus on the underlying functional inequalities and study the interaction of the Poisson term with other terms in the energy (external potential, local nonlinearities) with similar scaling properties: we shall consider quantities which are all critical for (SP) in the two-dimensional case. This is quite interesting from the mathematical point of view, as it is a threshold case for (SP) systems and involves a non sign-defined logarithmic kernel. The $d=2$ case complements the results of~\cite{Cingolani_2019,Cingolani_2016} in the limit regime involving logarithmic local nonlinearities. For related questions for $d=3$, we refer to~\cite{Cingolani_2019} and references therein. In higher dimensions, the problem is sub-critical if $d\le5$ and critical for $d=6$: see Section~\ref{Sec:IneqHigher}.

The (SP) system is used in quantum mechanics to represent a large number of particles by a single complex valued wave function. The local nonlinear term arises from local effects or thermodynamical considerations while the non-local Poisson potential accounts for long range forces which are either of repulsive nature (charged particles) or attractive (in case of gravitational and related models). Most models in the physics literature are justified only on an empirical basis as thermodynamical limits but are difficult to establish rigorously. This issue is anyway out of the scope of this paper.

The Schr\"odinger equation with a \emph{logarithmic nonlinearity} is a remarkable model in physics, with interesting mathematical properties: see~\cite{MR426670,Carles_2018,L_pez_2011}. The equation has soliton-like solutions of Gaussian shape (called \emph{Gaussons} in~\cite{MR426670}). We shall refer to~\cite{MR583902,MR719365,MR2002047,MR3195154} for some additional contributions in mathematics. \emph{Schr\"odinger-Poisson} systems are commonly used in charged particles transport and particularly in semiconductor physics, in the repulsive case. In this direction, a classical reference for mathematical properties is~\cite{MR1487521} and we also quote~\cite{MR2038149,MR2034231} for examples of applications. The mean-field attractive case (Newton equation) reflects gravitational forces instead of electrostatic forces. It is not studied as much as the repulsive case and it is mathematically more difficult: see for instance~\cite[Section~4]{MR2034231}. As a side remark, we may notice that stationary solutions of (SP) share many properties with stationary solutions of two-dimensional models of chemotaxis, and the same functional inequalities are involved: see~\cite{2019arXiv190903667D}. We can however handle the two cases, attractive and repulsive, in a common framework. We primarily focus on variational results, in relation with some interesting functional inequalities and their scaling properties.\medskip

For any function $u\in\mathrm H^1(\R^2)$, let us consider the \emph{Schr\"odinger energy}
\be{SchEnergy}
\mathcal E[u]:=\irtwo{|\nabla u|^2}+\alpha\irtwo{V\,|u|^2}+2\,\pi\,\beta\irtwo{W\,|u|^2}+\gamma\irtwo{|u|^2\,\log|u|^2}
\ee
where $\alpha$, $\beta$, $\gamma$ are real parameters and the \emph{self-consistent potential} $W$ is obtained as a solution of the \emph{Poisson equation}
\[\label{Poisson}
-\,\Delta W=|u|^2\,.
\]
The solution $W$ of~\eqref{Poisson} is defined only up to an additive constant: we make the specific choice $W=(-\Delta)^{-1}|u|^2$ given by the Green kernel as follows. Let us recall that on $\R^2$ the standard Green function $G_y$ associated with $(-\Delta)$, that is, the solution of $-\Delta_xG=\delta_y(x)$, is given by
\[
G(x,y)=-\,\frac1{2\,\pi}\,\log|x-y|\quad\forall\,(x,y)\in\R^2\times\R^2\,.
\]
Our choice amounts to take $W(x)=\int_{\R^2}|u(y)|^2\,G(x,y)\,dy$. As a consequence, we have
\[
W(x)\sim-\,\frac{\nrm u2^2}{2\,\pi}\,\log|x|\quad\mbox{as}\quad |x|\to+\infty\,,
\]
and also $x\cdot\nabla W(x)<0$ for large values of $|x|$ if, for instance, $u$ is compactly supported. The cases $\beta>0$ and $\beta<0$ correspond to two very different physical situations. The case $\beta<0$ is the \emph{attractive} case of a Newton-Poisson coupling for gravitational mean-field models. With $\beta>0$, the model represents the two-dimensional case of \emph{repulsive} electrostatic forces, \emph{i.e.}, a mean field version of a quantum Coulomb gas of interacting particles in dimension $d=2$.

The function $V$ is an \emph{external potential}, and we shall assume that it has a \emph{critical growth}. The parameter $\alpha\in\R$ is a coupling parameter, whose value has to be discussed depending on the other terms. Without much loss of generality, we can assume that
\be{V}
V(x)=2\,\log\(1+|x|^2\)\quad\forall\,x\in\R^2\,.
\ee
Concerning the \emph{local nonlinearity}, the case $\gamma<0$ corresponds to a \emph{focusing local nonlinearity} while $\gamma>0$ is the case a \emph{defocusing local nonlinearity}. It is standard to observe that any critical point of $\mathcal E$ under the mass constraint
\[
\irtwo{|u|^2}=M
\]
determines a standing wave of the \emph{nonlinear Schr\"odinger-Poisson} system
\[
i\,\frac{\partial\psi}{\partial t}=\Delta\psi+\alpha\,V\,\psi+\beta\,W\,\psi+\,\gamma\,\log|\psi|^2\,\psi\,.
\]
In this paper we shall focus on finding conditions on $\alpha$, $\beta$, $\gamma\in\R$ insuring that the functional~$\mathcal{E}$ is either bounded or unbounded from below on
\[
\mathcal H_M:=\left\{u\in\mathrm H^1(\R^2)\,:\,\nrm u2^2=M\right\}\,.
\]

This paper is organized as follows. We establish in Section~\ref{Sec:Ineq} several new functional inequalities which generalize the logarithmic Hardy-Littlewood-Sobolev inequality, with an application to a \emph{free energy} functional in dimension two: see Theorem~\ref{Thm:FreeEnergy}. Section~\ref{Sec:Schrodinger} is devoted to the boundedness from below of the Schr\"odinger energy $\mathcal E$, with main results in Theorem~\ref{Thm:E}.

\section{New logarithmic inequalities and free energy estimates}\label{Sec:Ineq}

\subsection{Generalized logarithmic Hardy-Littlewood-Sobolev inequalities}

The \emph{logarithmic Hardy-Littlewood-Sobolev inequality}
\be{logHLS}
\irtwo{\rho\,\log\(\frac\rho M\)}+\frac 2M\iint_{\R^2\times\R^2}\rho(x)\,\rho(y)\,\log|x-y|\,dx\,dy+M\,(1+\log\pi)\ge0
\ee
has been established in optimal form in~\cite{MR1143664} by E.~Carlen and M.~Loss, and in~\cite{Beckner93} by W.~Beckner for any $\rho\in\mathrm L^1_+(\R^2)$ such that \hbox{$\irtwo\rho=M>0$}. Equality is achieved by $\rho=\rho_\star$ with
\be{rhostar}
\rho_\star(x):=\frac M{\pi\(1+|x|^2\)^2}\quad\forall\,x\in\R^2\,,
\ee
and also by any function obtained from $\rho_\star$ by a multiplication by a positive constant (with the corresponding mass constraint), a scaling or a translation. Alternative proofs based on fast diffusion flows have been obtained in~\cite{MR2745814,MR2915466,MR3227280}. Also see~\cite{MR1230930,MR2996772,Dolbeault:2015ly,MR677001} for further related results and considerations on dual Onofri type inequalities and \cite{MR2858468} for a rearrangement-free proof of~\eqref{logHLS} using reflection positivity. Inequality~\eqref{logHLS} provides us with a useful lower bound on the free energy in the case of an attractive Poisson equation corresponding to the Keller-Segel model: see~\cite{MR2226917,MR2103197}, or in the case of a mean-field Newton equation in gravitational models. In presence of the potential $V$ given by~\eqref{V}, we have
\begin{multline}\label{Ineq:LogHLS}
\irtwo{\rho\,\log\(\frac\rho M\)}+2\,\param\irtwo{\log\(1+|x|^2\)\rho}+M\,(1-\param+\log\pi)\\
\ge\frac 2M\,(\param-1)\iint_{\R^2\times\R^2}\rho(x)\,\rho(y)\,\log|x-y|\,dx\,dy
\end{multline}
for any $\param\ge0$ and for any function $\rho\in\mathrm L^1_+(\R^2)$ with $M=\irtwo\rho>0$, according to~\cite{2019arXiv190903667D}. Compared to~\cite{2019arXiv190903667D}, the discrepancy in the coefficient of $M$ in the last term of the r.h.s.~in~\eqref{Ineq:LogHLS} is due to the normalization of $V$ as defined by~\eqref{V}. Equality again holds if $\rho=\rho_\star$ given by~\eqref{rhostar}. When $\param=0$,~\eqref{Ineq:LogHLS} is nothing else than~\eqref{logHLS} while the case $\param=1$ is easily recovered by Jensen's inequality. Notice that the sign of the coefficient in front of the convolution term in the r.h.s.~of~\eqref{Ineq:LogHLS} becomes positive if $\param>1$.

Let us divide~\eqref{Ineq:LogHLS} by $\param>0$ and then take the limit as $\param\to+\infty$. By doing this, we obtain a new inequality, which differs from~\eqref{logHLS} and is of interest by itself.
\begin{lemma}\label{Lem:LogHLSlim} For any function $\rho\in\mathrm L^1_+(\R^2)$ such that $\irtwo\rho=M$, we have
\be{Ineq:LogHLS-l}
2\irtwo{\log\(1+|x|^2\)\rho}-M\ge \frac2M\iint_{\R^2\times\R^2}\rho(x)\,\rho(y)\,\log|x-y|\,dx\,dy\,.
\ee
Moreover equality in~\eqref{Ineq:LogHLS-l} is achieved if and only if $\rho=\rho_\star$.\end{lemma}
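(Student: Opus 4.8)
The plan is to obtain \eqref{Ineq:LogHLS-l} as the limit of the family \eqref{Ineq:LogHLS} after dividing by $\param$ and letting $\param\to+\infty$, as announced in the text just before the statement. Dividing \eqref{Ineq:LogHLS} by $\param>0$ gives
\[
\frac1\param\irtwo{\rho\,\log\(\frac\rho M\)}+2\irtwo{\log\(1+|x|^2\)\rho}+M\,\(\frac1\param-1+\frac{\log\pi}\param\)\ge\frac2M\,\(1-\frac1\param\)\iint_{\R^2\times\R^2}\rho(x)\,\rho(y)\,\log|x-y|\,dx\,dy\,,
\]
and formally sending $\param\to+\infty$ kills the first term and the $\param^{-1}$ pieces, leaving exactly \eqref{Ineq:LogHLS-l}. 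So the real content is making this limit rigorous for an arbitrary $\rho\in\mathrm L^1_+(\R^2)$ with $\irtwo\rho=M$, together with the equality discussion.

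First I would dispose of the trivial integrability reductions: if either $\irtwo{\log(1+|x|^2)\,\rho}=+\infty$ or the double integral $\iint\rho(x)\rho(y)\log|x-y|\,dx\,dy$ is $-\infty$ (note $\log|x-y|\le\log(1+|x|)+\log(1+|y|)$ pointwise, so its positive part is always controlled by the first moment term), the inequality holds or is vacuous; so it suffices to argue when both quantities are finite. Then I would split the convolution integral over $|x-y|\le1$ and $|x-y|>1$. On $|x-y|>1$ the integrand is nonnegative and bounded above by $\log(1+|x|)+\log(1+|y|)\in\mathrm L^1(\rho\otimes\rho)$; on $|x-y|\le1$ the integrand is $\le0$. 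The only remaining subtlety for passing to the limit in \eqref{Ineq:LogHLS} is the term $\frac1\param\irtwo{\rho\log(\rho/M)}$: its negative part is finite because $-s\log s\le C\,s^{1/2}$ for $s\ge0$ combined with the moment bound (a standard argument: $\int\{\rho<1\}(-\rho\log\rho)\le\int\rho\,e^{-\frac12\log(1+|x|^2)}\cdot[\ldots]$, bounding the low-density region using the $\log(1+|x|^2)$ weight), so $\frac1\param\irtwo{\rho\log(\rho/M)}\to0$ whenever the entropy is finite; and when it is $+\infty$ one instead uses the finiteness of all other terms in \eqref{Ineq:LogHLS} to conclude the entropy term is in fact finite, or truncates $\rho$ and passes to the limit by monotone/dominated convergence. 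This dispatches the inequality.

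For the equality case, the clean route is to use that equality holds in \eqref{Ineq:LogHLS} precisely for $\rho=\rho_\star$ for every fixed $\param$ (in particular for some $\param>1$, where the coefficient of the convolution term is strictly positive), and to check directly that $\rho=\rho_\star$ also makes \eqref{Ineq:LogHLS-l} an equality — this is an explicit computation with $\rho_\star$ from \eqref{rhostar}, where all three integrals are classical. Conversely, suppose $\rho$ achieves equality in \eqref{Ineq:LogHLS-l}. Fix any $\param>1$. Subtracting \eqref{Ineq:LogHLS-l} (multiplied by $\param-1$) from \eqref{Ineq:LogHLS} yields
\[
\irtwo{\rho\,\log\(\frac\rho M\)}+2\irtwo{\log\(1+|x|^2\)\rho}+M\,\log\pi\ge0\,,
\]
which is exactly \eqref{logHLS} written via $\log|x-y|\le\log(1+|x|^2)^{1/2}+\log(1+|y|^2)^{1/2}$—wait, more precisely it is the $\param=1$ Jensen form. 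I would instead argue: the difference of \eqref{Ineq:LogHLS} and $(\param-1)\times$\eqref{Ineq:LogHLS-l} is an inequality that is \emph{independent of the convolution term}, saturated by $\rho_\star$, and equality in it forces $\rho=\rho_\star$ via the strict convexity in Jensen's inequality (the inequality being $\irtwo{\rho\log(\rho/M)}+2\irtwo{\log(1+|x|^2)\,\rho}+M\,\log\pi\ge0$, whose equality case is the Gaussian-type profile, i.e. exactly $\rho_\star$). Hence equality in \eqref{Ineq:LogHLS-l} plus equality forced in this auxiliary inequality gives $\rho=\rho_\star$.

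The main obstacle I anticipate is the rigorous justification of the limit $\param\to+\infty$ in the presence of possibly-infinite entropy: one must be careful that dividing by $\param$ and taking limits is legitimate termwise, which requires the integrability bookkeeping above (controlling the negative part of the entropy by the logarithmic moment) rather than any deep new estimate. A secondary, more delicate point is pinning down the equality case cleanly: the cheapest rigorous argument is the "subtract a multiple of the limiting inequality from \eqref{Ineq:LogHLS}" trick, reducing the rigidity to the known rigidity of a Jensen-type (logarithmic Sobolev flavoured) inequality whose unique optimizer is $\rho_\star$; verifying that $\rho_\star$ is indeed optimal there, and that the reduced inequality has a strictly convex structure, is routine but must be done.
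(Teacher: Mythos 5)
Your derivation of the inequality itself is a genuinely different route from the paper's. You take \eqref{Ineq:LogHLS} from the literature, divide by $\param$ and let $\param\to+\infty$, with integrability bookkeeping to justify the termwise limit; the paper instead gives a \emph{direct} proof that never uses \eqref{Ineq:LogHLS}: for $M=1$ one has the exact identity
\[
2\irtwo{\log\(1+|x|^2\)\rho}-1-2\iint_{\R^2\times\R^2}\rho(x)\,\rho(y)\,\log|x-y|\,dx\,dy
=-\,2\iint_{\R^2\times\R^2}\big(\rho(x)-\mu_\star(x)\big)\big(\rho(y)-\mu_\star(y)\big)\log|x-y|\,dx\,dy
=4\,\pi\irtwo{|\nabla W|^2}
\]
with $W=-(-\Delta)^{-1}(\rho-\mu_\star)$ and $\mu_\star=\rho_\star/M$, the right-hand side being manifestly nonnegative. (The paper even points out afterwards that \eqref{Ineq:LogHLS} can be \emph{recovered} from \eqref{logHLS} and \eqref{Ineq:LogHLS-l}, so the logical order is reversed relative to yours; still, since \eqref{Ineq:LogHLS} is quoted from a reference, your use of it is admissible.) Your limit argument can be completed, though your control of the negative part of the entropy via $-s\log s\le C\sqrt s$ is not quite right as stated ($\irtwo{\sqrt\rho}$ need not be finite); the standard fix is the relative entropy with respect to the probability density $\pi^{-1}(1+|x|^2)^{-2}$, which uses exactly the finiteness of the logarithmic moment.

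The genuine gap is the \emph{only if} part of the equality statement. A limiting argument loses the equality case, and your repair does not work: one cannot ``subtract \eqref{Ineq:LogHLS-l} multiplied by $\param-1$ from \eqref{Ineq:LogHLS}'' — from $A\ge B$ and $C\ge D$ one cannot conclude $A-C\ge B-D$. What is true is the reverse decomposition: for $\param>1$, \eqref{Ineq:LogHLS} is the \emph{sum} of the Jensen-type inequality (\eqref{Ineq:LogHLS-ll} with $\eta=2$) and $(\param-1)$ times \eqref{Ineq:LogHLS-l}; in terms of the nonnegative deficits, $\delta_{\mathrm{HLS},\param}(\rho)=\delta_{\mathrm{Jensen}}(\rho)+(\param-1)\,\delta_{\mathrm{lim}}(\rho)$. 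This proves the \emph{if} direction (equality at $\rho_\star$ in \eqref{Ineq:LogHLS} forces both summands to vanish there), but for the converse it only gives that $\delta_{\mathrm{lim}}(\rho)=0$ implies $\delta_{\mathrm{HLS},\param}(\rho)=\delta_{\mathrm{Jensen}}(\rho)$; nothing ``forces equality in the auxiliary inequality'' as you assert, and a priori both of those deficits could be strictly positive and equal for some $\rho\ne\rho_\star$. To close this direction you need a rigidity mechanism, which the paper's identity supplies for free: the deficit equals $4\,\pi\irtwo{|\nabla W|^2}$, which vanishes if and only if $W$ is constant, i.e.\ $\rho=\mu_\star$.
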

\begin{proof} We give a direct proof of~\eqref{Ineq:LogHLS-l}, which does not rely on~\eqref{Ineq:LogHLS}. A preliminary observation is that~\eqref{Ineq:LogHLS-l} makes sense, \emph{i.e.}, that
\[
\rho\mapsto\irtwo{\log\(1+|x|^2\)\rho}-\frac1M\iint_{\R^2\times\R^2}\rho(x)\,\rho(y)\,\log|x-y|\,dx\,dy
\]
is bounded from below. We may indeed notice that, for any $x$, $y\in\R^d$,
\[
|x-y|^2=|x|^2+|y|^2-2\,x\cdot y\le|x|^2+|y|^2+\(1+|x|^2\,|y|^2\)=\(1+|x|^2\)\(1+|y|^2\)\,,
\]
so that, after multiplying by $\rho(x)\,\rho(y)$ and integrating with respect to $x$ and $y$, we obtain
\begin{align*}
&2\iint_{\R^2\times\R^2}\rho(x)\,\rho(y)\,\log|x-y|\,dx\,dy\\
&\le\iint_{\R^2\times\R^2}\rho(x)\,\rho(y)\,\Big(\log\(1+|x|^2\)+\log\(1+|y|^2\)\Big)\,dx\,dy\le2\,M\irtwo{\log\(1+|x|^2\)\rho}\,.
\end{align*}
As a consequence, the problem is reduced to proving that the largest constant $C$ such that
\[
2\irtwo{\log\(1+|x|^2\)\rho}-C\ge \frac2M\iint_{\R^2\times\R^2}\rho(x)\,\rho(y)\,\log|x-y|\,dx\,dy
\]
is $C=M$.

At heuristic level, if we admit that $\rho_\star$ realizes the equality case, this equality can be established as follows. The potential $V$ given by~\eqref{V} is such that $\mu_\star=\frac1\pi\,e^{-V}=\frac{\rho_\star}M$ is a probability measure and we have
\begin{equation*}
\Delta V=8\,\pi\,\mu_\star\,.
\end{equation*}
One can also check that
\[
(-\Delta)^{-1}\mu_\star:=-\frac1{2\,\pi}\int_{\R^2}\log|x-y|\,\mu_\star(y)\,dy=-\,\frac V{8\,\pi}=-\,\frac1{4\,\pi}\,\log\(1+|x|^2\)
\]
which requires a careful analysis of the integration constants. Indeed, in radial coordinates, by solving the ordinary differential equation
\[
\big(r\,V'\big)'=\frac{8\,r}{\(1+r^2\)}\,,\quad V'(0)=0\,,\quad V(0)=V_0\,,
\]
a couple of integrations shows that
\[
V'(r)=\frac1r\(\frac4{1+r^2}-4\)\quad\mbox{and}\quad V(r)-V_0=\int_0^r\frac{4\,s}{1+s^2}\,ds=2\,\log\(1+r^2\)\,,
\]
so that $8\,\pi\,(-\Delta)^{-1}\mu_\star=-(V+V_0)$ with $V_0=0$. Alternatively, a direct proof is obtained by observing that
\[
V_0=4\int_{\R^2}\log|y|\,\mu_\star(y)\,dy=8\int_0^{+\infty}\frac{r\,\log r}{\(1+r^2\)^2}\,dr=0\,,
\]
where the last equality is a consequence of the change of variables $r\mapsto1/r$. Taking into account the identity
\[
\irtwo{\log\(1+|x|^2\)\,\mu_\star(x)}=\int_0^{+\infty}\frac{2\,r\,\log\(1+r^2\)}{\(1+r^2\)^2}\,dr=1\,,
\]
this is consistent with the fact that~$\rho_\star=M\,\mu_\star$ corresponds to the equality case in~\eqref{logHLS}, according to~\cite{MR1143664}. Altogether, we have $C=M$, meaning that~\eqref{Ineq:LogHLS-l} is an equality if $\rho=\rho_\star$.

After these preliminary considerations, which are provided only for a better understanding of the functional framework, let us give a proof. With no loss of generality, we may assume that $M=1$ because of the $1$-homogeneity of~\eqref{Ineq:LogHLS-l}. Let us notice that
\begin{multline*}
2\irtwo{\log\(1+|x|^2\)\rho}-1-2\iint_{\R^2\times\R^2}\rho(x)\,\rho(y)\,\log|x-y|\,dx\,dy\\
=-\,2\iint_{\R^2\times\R^2}\big(\rho(x)-\mu_\star(x)\big)\,\big(\rho(y)-\mu_\star(y)\big)\,\log|x-y|\,dx\,dy.
\end{multline*}
We recover that the equality case in~\eqref{Ineq:LogHLS-l} is achieved if $\rho=\mu_\star$. With $W=-(-\Delta)^{-1}(\rho-\mu_\star)$, we obtain
\begin{multline*}
-\,2\iint_{\R^2\times\R^2}\big(\rho(x)-\mu_\star(x)\big)\,\big(\rho(y)-\mu_\star(y)\big)\,\log|x-y|\,dx\,dy\\
=4\,\pi\irtwo{(\rho-\mu_\star)\,(-\Delta)^{-1}(\rho-\mu_\star)}\\
=-\,4\,\pi\irtwo{(\Delta W)\,W}=4\,\pi\irtwo{|\nabla W|^2}\ge0\,,
\end{multline*}
where the last equality is obtained by a simple integration by parts. This can be done only because $\irtwo{(\rho-\mu_\star)}=0$, a necessary and sufficient condition to guarantee that $\nabla W$ is square integrable (for a proof, one has to study the behavior of the solution of the Poisson equation as $|x|\to+\infty$). At this point it is clear that $\irtwo{|\nabla W|^2}=0$ if and only if $\rho=\mu_\star$. The general case with an arbitrary $M>0$ is obtained by writing $\rho_\star=M\;\mu_\star$, which concludes the proof.
\end{proof}

The equality case in~\eqref{Ineq:LogHLS-l} is achieved among radial functions. It is classical that the l.h.s.~is decreasing under symmetric decreasing rearrangements, while the r.h.s. is increasing. The strict rearrangement inequality for the logarithmic kernel is proved in~\cite[Lemma~2]{MR1143664}. As a limit case of $\iint_{\R^2\times\R^2}\big(\rho(x)-\mu_\star(x)\big)\,\big(\rho(y)-\mu_\star(y)\big)\,|x-y|^\lambda\,dx\,dy$ when $\lambda\to 0_-$, according to~\cite[Theorem~4.3]{MR1817225} (also see~\cite{lopes2019uniqueness} for interesting consequences), this is indeed expected. Justifying the square integrability of $\nabla W$ has therefore to be done only among radial functions, which is elementary using, \emph{e.g.}, a compactly supported function $\rho$ and a density argument.

Also notice that one can now recover~\eqref{Ineq:LogHLS} as a simple consequence of~\eqref{logHLS} and~\eqref{Ineq:LogHLS-l}. Next, we turn our attention to an inequality which is a consequence of convexity and Jensen's inequality.~Let
\[
\mathsf J_\eta[\rho]:=\irtwo{\rho\,\log\(\frac\rho{\nrm\rho1}\)}+\eta\irtwo{\log\(1+|x|^2\)\rho}\quad\forall\,\rho\in\mathrm L^1_+(\R^2)\,.
\]
\begin{lemma}\label{Lem:LogHLSlim1} Let $\eta>0$, $M>0$ and $\mathcal X_M:=\left\{\rho\in\mathrm L^1_+(\R^2)\,:\,\nrm\rho1=M\right\}$.
\begin{enumerate}
\item[(i)] If $\eta>1$, then $\mathsf J_\eta$ is bounded from below on $\mathcal X_M$ and
\be{Ineq:LogHLS-ll}
\irtwo{\rho\,\log\(\frac\rho M\)}+\eta\irtwo{\log\(1+|x|^2\)\rho}\ge M\,\log\Big(\frac{\eta-1}\pi\Big)\quad\forall\,\rho\in\mathcal X_M\,.
\ee
For any $\eta>1$, equality in~\eqref{Ineq:LogHLS-ll} is achieved by $\rho=M\,\rho_\eta$, where
\[
\rho_\eta(x):=\frac{\eta-1}{\pi\(1+|x|^2\)^\eta}\quad\forall\,x\in\R^2\,.
\]
\item[(ii)] If $\eta\in(0,1]$, then $\inf_{\mathcal X_M}\mathsf J_\eta=-\infty$.
\end{enumerate}
\end{lemma}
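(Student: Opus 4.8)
The plan is to treat part~(i) exactly along the lines the paper announces, namely convexity and Jensen's inequality: I would rewrite $\mathsf J_\eta$, up to an additive constant, as a relative entropy with respect to the density $\rho_\eta$. Part~(ii) I would settle by exhibiting an explicit minimizing sequence.

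For part~(i), first note that for $\eta>1$ the substitution $s=r^2$ gives $\irtwo{\(1+|x|^2\)^{-\eta}}=\pi/(\eta-1)$, so $\rho_\eta$ is a probability density. Using $\log\rho_\eta=\log\frac{\eta-1}\pi-\eta\,\log\(1+|x|^2\)$, one obtains for every $\rho\in\mathcal X_M$ the identity
\[
\irtwo{\rho\,\log\(\frac\rho M\)}+\eta\irtwo{\log\(1+|x|^2\)\,\rho}=M\irtwo{\frac\rho M\,\log\(\frac{\rho/M}{\rho_\eta}\)}+M\,\log\(\frac{\eta-1}\pi\)\,.
\]
Since $\rho/M$ and $\rho_\eta$ are probability densities, the elementary bound $t\,\log(t/s)\ge t-s$ (for $t\ge0$, $s>0$) shows that the negative part of the integrand on the right is bounded by $\rho_\eta\in\mathrm L^1$; hence the relative entropy is well defined in $[0,+\infty]$, which also makes $\mathsf J_\eta[\rho]$ well defined in $\big[M\log\frac{\eta-1}\pi,+\infty\big]$. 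By Jensen's inequality (convexity of $t\mapsto t\log t$) this relative entropy is nonnegative, and it vanishes precisely when $\rho/M=\rho_\eta$ a.e. This yields the lower bound~\eqref{Ineq:LogHLS-ll}, hence boundedness from below of $\mathsf J_\eta$ on $\mathcal X_M$, together with the identification $\rho=M\,\rho_\eta$ of the equality case.

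For part~(ii), with $\eta\in(0,1]$ I would build a minimizing sequence from truncations of $\(1+|x|^2\)^{-\eta}$. For $R>0$, set $\rho_R:=c_R\,\(1+|x|^2\)^{-\eta}\,\mathbbm{1}_{\{|x|<R\}}$ with $c_R:=M/I_R$, where $I_R:=\irtwo{\(1+|x|^2\)^{-\eta}\,\mathbbm{1}_{\{|x|<R\}}}$, so that $\nrm{\rho_R}1=M$; note $I_R\to+\infty$ as $R\to+\infty$, which is exactly the borderline failure of integrability of $\(1+|x|^2\)^{-\eta}$ over $\R^2$ when $\eta\le1$. On the support of $\rho_R$ one has $\log(\rho_R/M)=-\log I_R-\eta\,\log\(1+|x|^2\)$, so the weighted term cancels the corresponding part of the entropy and the computation collapses to
\[
\mathsf J_\eta[\rho_R]=\irtwo{\rho_R\,\log\(\frac{\rho_R}M\)}+\eta\irtwo{\log\(1+|x|^2\)\,\rho_R}=-\,M\,\log I_R\longrightarrow-\infty\quad\text{as }R\to+\infty\,,
\]
and since each $\rho_R\in\mathcal X_M$, this proves $\inf_{\mathcal X_M}\mathsf J_\eta=-\infty$.

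I do not expect a genuine obstacle; the only delicate points are bookkeeping. In~(i) one must ensure the passage to the relative-entropy identity is not a subtraction of two infinities: the bound $t\log(t/s)\ge t-s$ takes care of this, and it also shows that~\eqref{Ineq:LogHLS-ll} is meaningful --- with the value $+\infty$ allowed --- for \emph{every} $\rho\in\mathcal X_M$, not only those with finite entropy or finite weighted integral. In~(ii) it only remains to check that $\rho_R\in\mathrm L^1_+(\R^2)$ has the prescribed mass and that $I_R$ diverges, both elementary. One could alternatively, for $\eta<1$ only, use the scaling $\rho\mapsto\lambda^{-2}\rho(\cdot/\lambda)$ and let $\lambda\to+\infty$, since then $\mathsf J_\eta$ behaves like $2\,M\,(\eta-1)\log\lambda$ up to bounded terms; the truncation family has the advantage of covering $\eta<1$ and the threshold $\eta=1$ simultaneously.
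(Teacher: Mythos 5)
Your proposal is correct. Part (i) is essentially the paper's argument: both of you rewrite $\mathsf J_\eta$ as $M$ times the relative entropy of $\rho/M$ with respect to the probability density $\rho_\eta$ plus the constant $M\,\log\frac{\eta-1}\pi$, and then invoke the pointwise convexity bound $u\,\log u\ge u-1$ (your $t\,\log(t/s)\ge t-s$ is the same inequality after multiplying by $s$); your additional remark that this bound makes the combined integral well defined in $\big[M\log\frac{\eta-1}\pi,+\infty\big]$ for every $\rho\in\mathcal X_M$ is a point the paper leaves implicit, and it is worth having. For part (ii) you reach the same conclusion by a genuinely different minimizing family. The paper tests with the globally defined densities $M\,\rho_\zeta$, $\zeta>1$, computes $\frac1M\,\mathsf J_\eta[M\,\rho_\zeta]=\log\frac{\zeta-1}\pi-\frac{\zeta-\eta}{\zeta-1}$, and lets $\zeta\to1_+$, i.e.\ it approaches the non-integrable borderline profile from within the admissible family $\{\rho_\zeta\}_{\zeta>1}$; you instead truncate the borderline profile $\(1+|x|^2\)^{-\eta}$ itself at radius $R$ and normalize. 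Your computation $\mathsf J_\eta[\rho_R]=-\,M\,\log I_R$ is exact: the weighted term cancels the matching piece of the entropy, all integrals are finite for fixed $R$ since $\rho_R$ is bounded with compact support, and $I_R\to+\infty$ for every $\eta\le1$ (logarithmically at $\eta=1$, polynomially for $\eta<1$). Both constructions cover $\eta<1$ and $\eta=1$ uniformly; yours is the more economical computation, while the paper's has the side benefit of recording the explicit values of $\irtwo{\rho_\zeta\log\rho_\zeta}$ and $\irtwo{\log\(1+|x|^2\)\rho_\zeta}$. Your closing observation that pure scaling only works for $\eta<1$ and misses the threshold $\eta=1$ is also accurate.
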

If $\eta=2$, then $\rho_2=\rho_\star$, while~\eqref{Ineq:LogHLS-ll} amounts to $\mathsf J_\eta[\rho]\ge\mathsf J_\eta[M\,\rho_\eta]$ for any $\eta>1$. If $\param$ is restricted to the range $[0,1]$, we notice as in~\cite{2019arXiv190903667D} that~\eqref{Ineq:LogHLS} is a simple convex combination, with coefficients $(1-\param)$ and $\param$, of~\eqref{logHLS} and~\eqref{Ineq:LogHLS-ll} written with $\eta=2$.
\begin{proof} A direct computation based on $\frac d{dr}\(1+r^2\)^{1-\eta}=-2\,(\eta-1)\,r\(1+r^2\)^{-\eta}$ shows that
\[
\irtwo{\rho_\eta}=2\,(\eta-1)\int_0^{+\infty}r\(1+r^2\)^{-\eta}\,dr=1
\]
for all $\eta>1$ and
\[
\mathsf J_\eta[\rho]=\irtwo{\rho\,\log\Big(\frac\rho{M\,\rho_\eta}\Big)}+M\,\log\Big(\frac{\eta-1}\pi\Big)\quad\forall\,\rho\in\mathcal X_M\,.
\]
Using that $u\mapsto u\,\log u-u+1$ is a convex function whose minimum is $0$, we get
\[
\irtwo{\rho\,\log\Big(\frac\rho{M\,\rho_\eta}\Big)}=\irtwo{\frac\rho{M\,\rho_\eta} \log\Big(\frac\rho{M\,\rho_\eta}\Big)\,M\,\rho_\eta}\ge\irtwo{\Big(\frac\rho{M\,\rho_\eta}-1\Big)\,M\,\rho_\eta}=0
\]
by taking $u=\rho/(M\,\rho_\eta)$ and then integrating against $M\,\rho_\eta\,dx$. This proves~\eqref{Ineq:LogHLS-ll} for any $\eta>1$, where equality holds as a consequence of $\mathsf J_\eta[M\,\rho_\eta]=M\,\log\big(\frac{\eta-1}\pi\big)$.

Let us consider the case $\eta\in(0,1]$ and take $\rho=M\,\rho_\zeta$ with $\zeta>1$ as a test function. With a few integrations by parts, we obtain
\begin{multline*}\label{Id-2}
\irtwo{\log\(1+|x|^2\)\,\rho_\zeta(x)}=2\,(\zeta-1)\int_0^{+\infty}r\,\log\(1+r^2\)\,\(1+r^2\)^{-\zeta}\,dr\\
=-\int_0^{+\infty}\frac d{dr}\(\(1+r^2\)^{1-\zeta}\)\log\(1+r^2\)\,dr=2\int_0^{+\infty}r\(1+r^2\)^{-\zeta}\,dr=\frac1{\zeta-1}\,,
\end{multline*}
\begin{multline*}
\irtwo{\rho_\zeta\,\log\rho_\zeta}\\
=\log\Big(\frac{\zeta-1}\pi\Big)\irtwo{\rho_\zeta}-\zeta\irtwo{\log\(1+|x|^2\)\,\rho_\zeta(x)}=\log\Big(\frac{\zeta-1}\pi\Big)-\frac\zeta{\zeta-1}\,,
\end{multline*}
so that $\lim_{\zeta\to1_+}\mathsf J_\eta[M\,\rho_\zeta]=-\infty$ because
\[
\frac 1M\,\mathsf J_\eta[M\,\rho_\zeta]=\irtwo{\rho_\zeta\,\log\rho_\zeta}+\eta\irtwo{\log\(1+|x|^2\)\,\rho_\zeta(x)}=\log\Big(\frac{\zeta-1}\pi\Big)-\frac{\zeta-\eta}{\zeta-1}\,.
\]
\end{proof}

\subsection{Boundedness from below of the free energy functional}

Let us consider the \emph{free energy} functional defined by
\[\label{F}
\mathcal F_{\mathsf a,\mathsf b}[\rho]:=\irtwo{\rho\,\log\(\frac\rho M\)}+\mathsf a\irtwo{\log\(1+|x|^2\)\rho}-\frac{\mathsf b}M\iint_{\R^2\times\R^2}\rho(x)\,\rho(y)\,\log|x-y|\,dx\,dy
\]
for any $\rho\in\mathrm L^1_+(\R^2)$ such that $\irtwo\rho=M$. We look for the range of the parameters $\mathsf a$ and~$\mathsf b$ such that
\be{Ineq:LogHLSimproved}
\mathcal F_{\mathsf a,\mathsf b}[\rho]\ge\mathcal C(\mathsf a,\mathsf b)\,M\quad\forall\,\rho\in\mathrm L^1_+(\R^2)\quad\mbox{such that}\quad\nrm\rho1=M,\,
\ee
for some constant $\mathcal C(\mathsf a,\mathsf b)$. Inequality~\eqref{Ineq:LogHLS} with $\param\ge0$ is obtained as the special case $\mathsf a=2\,\param$ and $\mathsf b=2\,(\param-1)$, with $\mathcal C(\mathsf a,\mathsf b)=M\,(\param-1-\log\pi)$, according to~\cite{2019arXiv190903667D}. As a consequence, we also know that~\eqref{Ineq:LogHLSimproved} holds for some $\mathcal C(\mathsf a,\mathsf b)>-\infty$ if $\mathsf a\ge2\,\param$ and $\mathsf b=2\,(\param-1)$, that is, $0\le\mathsf b+2\le\mathsf a$. This range can be improved. For instance, if $\mathsf b=0$, it is clear from Lemma~\ref{Lem:LogHLSlim1} that the threshold is at $\mathsf a=1$ and not $\mathsf a=2$. Our result (see Fig.~\ref{Fig0}) is as follows.
\begin{theorem}\label{Thm:FreeEnergy}  Inequality~\eqref{Ineq:LogHLSimproved} holds for some $\mathcal C(\mathsf a,\mathsf b)>-\infty$ if either $\mathsf a=0$ and $\mathsf b=-2$, or
\[
\mathsf a>0\,,\quad-2\le\mathsf b<\mathsf a-1\quad\mbox{and}\quad\mathsf b\le2\,\mathsf a-2\,.
\]
If either $\mathsf a<0$ or $\mathsf b<-2$ or $\mathsf b>\min\{\mathsf a-1,2\,\mathsf a-2\}$ or $(\mathsf a,\mathsf b)=(1,0)$, then
\[
\inf_{\rho\,\in\mathcal X_1}\mathcal F_{\mathsf a,\mathsf b}[\rho]=-\infty\,.
\]
If $0\le\mathsf a<1$ and $\mathsf b=2\,\mathsf a-2$, then
\[
\mathcal C(\mathsf a,2\,\mathsf a-2)=-\,\log\(\frac{e\,\pi}{1-\mathsf a}\)\,.
\]
Moreover, if $\mathsf a>0$ there is no minimizer for $\mathcal C(\mathsf a,2\,\mathsf a-2)$.
\end{theorem}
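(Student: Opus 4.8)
The plan is to divide the $(\mathsf a,\mathsf b)$-plane according to the three competing terms of $\mathcal F_{\mathsf a,\mathsf b}$, which I abbreviate (taking $M=1$ by homogeneity) as $S[\rho]=\irtwo{\rho\log\rho}$, $P[\rho]=\irtwo{\log(1+|x|^2)\rho}$ and $I[\rho]=\iint\rho(x)\rho(y)\log|x-y|\,dx\,dy$. For the boundedness region I would reduce everything to the three sharp inequalities already at hand: \eqref{logHLS}, i.e. $S+2I\ge-(1+\log\pi)$; the limit inequality \eqref{Ineq:LogHLS-l}, i.e. $2P-2I\ge1$; and the convexity bound \eqref{Ineq:LogHLS-ll} of Lemma~\ref{Lem:LogHLSlim1}, i.e. $S+\eta P\ge\log\frac{\eta-1}\pi$ for $\eta>1$. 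Since the first and third have entropy coefficient $1$ and the second has none, I would seek $\lambda_1,\lambda_2,\lambda_3\ge0$ with $\lambda_1+\lambda_2=1$ and some $\eta>1$ so that $\lambda_1(S+2I)+\lambda_2(S+\eta P)+\lambda_3(2P-2I)=\mathcal F_{\mathsf a,\mathsf b}$. Matching the coefficients of $P$ and $I$ turns this into a linear feasibility problem whose solvability is exactly equivalent to $-2<\mathsf b<\min\{\mathsf a-1,2\mathsf a-2\}$ (which forces $\mathsf a>0$), and then $\mathcal C(\mathsf a,\mathsf b)=-\lambda_1(1+\log\pi)+\lambda_2\log\frac{\eta-1}\pi+\lambda_3>-\infty$. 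The two remaining boundary pieces are immediate: along $\mathsf b=-2$, $\mathsf a>0$ one adds $\mathsf a P\ge0$ to \eqref{logHLS}, and $(\mathsf a,\mathsf b)=(0,-2)$ is \eqref{logHLS} itself.

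The delicate boundary, and the step I expect to be hardest, is the segment $\mathsf b=2\mathsf a-2$, $0\le\mathsf a<1$, where the linear system degenerates ($\eta=1$). Here I would use the mass-preserving scaling $\rho_\lambda(x):=\lambda^{-2}\rho(x/\lambda)$, under which $S\mapsto S-2\log\lambda$, $I\mapsto I+\log\lambda$, while $P[\rho_\lambda]=\irtwo{\log(1+\lambda^2|x|^2)\rho}$. On the line $\mathsf b=2\mathsf a-2$ the coefficient of $\log\lambda$ cancels and $\mathcal F_{\mathsf a,2\mathsf a-2}[\rho_\lambda]\to\mathcal G_\mathsf a[\rho]:=S[\rho]+2\mathsf a\irtwo{\log|x|\,\rho}-(2\mathsf a-2)I[\rho]$ as $\lambda\to\infty$, a scale-invariant functional. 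Since $\log(1+|x|^2)-2\log|x|>0$, one has $\mathcal F_{\mathsf a,2\mathsf a-2}[\rho]>\mathcal G_\mathsf a[\rho]\ge\inf\mathcal G_\mathsf a$ for $\mathsf a>0$; combined with the scaling limit this yields $\inf\mathcal F_{\mathsf a,2\mathsf a-2}=\inf\mathcal G_\mathsf a$ and, by strictness, shows immediately that $\mathcal F_{\mathsf a,2\mathsf a-2}$ has no minimizer when $\mathsf a>0$. It then remains to compute $\inf\mathcal G_\mathsf a$. After reducing to radial $\rho$ by rearrangement, I would use that the radial logarithmic energy is $I[\rho]=2\int_0^\infty\log r\,F(r)\,dF(r)$ with $F$ the radial distribution function, so that the change of variables $r\mapsto r^{1-\mathsf a}$ maps $\mathcal G_\mathsf a$ back to the logarithmic Hardy--Littlewood--Sobolev functional; equivalently, the Euler--Lagrange equation of $\mathcal G_\mathsf a$ is a singular Liouville equation solved by $\rho^{(\mathsf a)}(x)=\frac{1-\mathsf a}\pi|x|^{-2\mathsf a}(1+|x|^{2(1-\mathsf a)})^{-2}$, the pull-back of $\rho_\star$ under that map. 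Evaluating the three integrals on $\rho^{(\mathsf a)}$ gives $\inf\mathcal G_\mathsf a=-\log\frac{e\pi}{1-\mathsf a}$, which is the claimed value of $\mathcal C(\mathsf a,2\mathsf a-2)$ and reduces to the sharp constant of \eqref{logHLS} at $\mathsf a=0$.

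For unboundedness I would exhibit an explicit destabilizing family in each case. If $\mathsf a<0$, translating a fixed profile to infinity fixes $S$ and $I$ while $P\to+\infty$, so $\mathsf a P\to-\infty$. If $\mathsf b<-2$, concentrating via $\rho_\lambda$ with $\lambda\to0$ gives $\mathcal F\sim-(2+\mathsf b)\log\lambda\to-\infty$; if $\mathsf b>2\mathsf a-2$, the same scaling with $\lambda\to+\infty$ gives $\mathcal F\sim(2\mathsf a-2-\mathsf b)\log\lambda\to-\infty$. The subtle regime is $\mathsf a-1<\mathsf b\le2\mathsf a-2$ (possible only for $\mathsf a>1$), where self-similar spreading fails because along it $I/P\to\tfrac12$; here I would use the two-scale family $\rho=(1-\delta)\mu_\star+\delta\,\sigma_L$, with $\mu_\star=\rho_\star$ of \eqref{rhostar} and $\sigma_L$ spread uniformly in $\log|x|$ over $1\le|x|\le L$. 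Using the explicit potential $\int_{\R^2}\log|x-y|\,\mu_\star(y)\,dy=\tfrac12\log(1+|x|^2)$ to evaluate the core--tail cross energy, one finds, letting $L\to\infty$ then $\delta\to0$ with $\delta\log L\to\infty$, that $S\sim-\delta\log L$, $P\sim\delta\log L$ and $I\sim\delta\log L$ with equal leading constants, whence $\mathcal F\sim(\mathsf a-1-\mathsf b)\,\delta\log L\to-\infty$ exactly when $\mathsf b>\mathsf a-1$. Finally $(\mathsf a,\mathsf b)=(1,0)$ is the functional $\mathsf J_1$, whose infimum is $-\infty$ by Lemma~\ref{Lem:LogHLSlim1}(ii). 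Besides the evaluation of $\inf\mathcal G_\mathsf a$, the main obstacle I anticipate is verifying that this two-scale family really reaches the sharp threshold $\mathsf b=\mathsf a-1$; everything else is bookkeeping with the three basic inequalities and the scaling identities.
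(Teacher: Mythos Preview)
Your plan is correct and matches the paper's argument essentially step for step: boundedness via convex combinations of \eqref{logHLS}, \eqref{Ineq:LogHLS-l} and \eqref{Ineq:LogHLS-ll} (the paper splits into the sub-cases $\mathsf b<0$ and $\mathsf b>0$ rather than writing a single feasibility system), the critical segment $\mathsf b=2\mathsf a-2$ via the same scaling reduction to the scale-invariant functional $\mathcal G_{\mathsf a}$ followed by the power change of variables $r\mapsto r^{1-\mathsf a}$ reducing to \eqref{logHLS}, and unboundedness via translation, dilation, and a two-scale family. The only point worth flagging is that the paper's two-scale competitor $(1-\varepsilon)\rho+\varepsilon\,\lambda^2\rho(\lambda\,\cdot)$ takes $\rho$ compactly supported away from the origin, so the two pieces have disjoint supports and the entropy is computed exactly; with your choice $(1-\delta)\mu_\star+\delta\,\sigma_L$ the supports overlap, so your claim $S\sim-\delta\log L$ should be justified via the convexity bound $S[(1-\delta)\mu_\star+\delta\sigma_L]\le(1-\delta)S[\mu_\star]+\delta\,S[\sigma_L]$, which indeed suffices since only an upper bound on $\mathcal F$ is required.
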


\setlength{\unitlength}{1cm}
\begin{figure}[hb]
\begin{picture}(10,6){\includegraphics[width=10cm]{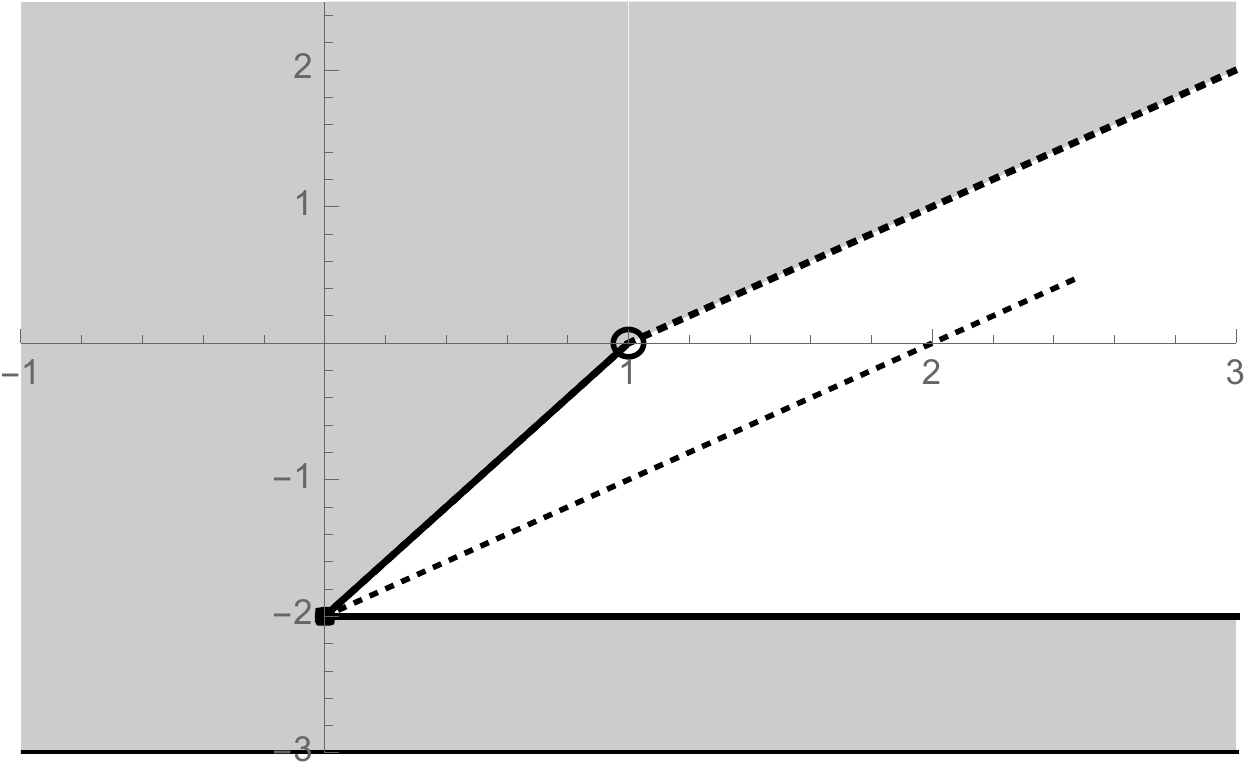}}
\put(0,0){$\mathsf a$}
\put(-7.3,6.4){$\mathsf b$}
\put(-2,4.5){\small$\mathsf b=\min\{\mathsf a-1,2\,\mathsf a-2\}$}
\put(-3.8,2.5){\small$\mathsf b=\mathsf a-2$}
\put(-6,4){$(1,0)$}
\put(-7,0.75){$(0,-2)$}
\end{picture}
\caption{\label{Fig0}\small\emph{White (resp.~grey) area corresponds to the domain in which~\eqref{Ineq:LogHLSimproved} holds for some finite constant $\mathcal C(\mathsf a,\mathsf b)$ (resp.~$\mathcal C(\mathsf a,\mathsf b)=-\infty$). We also know that $\mathcal C\big(\mathsf a,2\,(\mathsf a-1)\big)=-\,\log\({e\,\pi}/(1-\mathsf a)\)$ if $0\le\mathsf a<1$ and $\mathcal C(1,0)=-\infty$, while the boundedness from below of $\mathcal F_{\mathsf a,\mathsf b}$ is not known in the threshold case $\mathsf b=\mathsf a-1>0$. On the dotted half-line $\mathsf b=\mathsf a-2\ge-2$, optimality is achieved by $\rho_\star$ and Inequality~\eqref{Ineq:LogHLSimproved} corresponds to~\eqref{Ineq:LogHLS} with $\mathsf a=2\,\param$, $\mathsf b=2\,(\param-1)$, and $\tau\ge0$.}}
\end{figure}
The boundedness from below of $\mathcal F_{\mathsf a,\mathsf b}$ is unknown only in the case $\mathsf b=\mathsf a-1>0$. If $\mathsf b=2\,\mathsf a-2<0$, we do not only show the semi-boundedness of $\mathcal F_{\mathsf a,\mathsf b}$, but we actually compute the infimum $\mathcal C(\mathsf a,\mathsf b)$. The infimum is also known if $\mathsf b=\mathsf a-2\ge-2$ and in that case optimality is achieved by $\rho_\star$ according to~\eqref{Ineq:LogHLS}. Note that for $\mathsf a=0$, the inequality $\mathcal F_{\mathsf a,2\,(\mathsf a-1)}[\rho]\ge\mathcal C\big(\mathsf a,2\,(\mathsf a-1)\big)\,M$ is the sharp logarithmic Hardy-Littlewood-Sobolev inequality~\eqref{logHLS} and as $\mathsf a\to 1$ the infimum diverges to $-\infty$ consistently with the result of Lemma~\ref{Lem:LogHLSlim1}. For the convenience of the reader, we divide the proof of Theorem~\ref{Thm:FreeEnergy} in several intermediate result.
\begin{lemma}\label{Lem:LogHLSimproved} Inequality~\eqref{Ineq:LogHLSimproved} holds for some $\mathcal C(\mathsf a,\mathsf b)>-\infty$ if either $\mathsf a=0$ and $\mathsf b=-2$, or
\[
\mathsf a>0\quad\mbox{and}\quad-2\le\mathsf b<\min\{\mathsf a-1,2\,\mathsf a-2\}\,.
\]
\end{lemma}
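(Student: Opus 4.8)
The plan is to interpolate between the three inequalities already established, namely the sharp logarithmic Hardy--Littlewood--Sobolev inequality~\eqref{logHLS}, the limit inequality~\eqref{Ineq:LogHLS-l} of Lemma~\ref{Lem:LogHLSlim}, and the Jensen-type inequality~\eqref{Ineq:LogHLS-ll} of Lemma~\ref{Lem:LogHLSlim1} (for $\eta>1$). Each of these controls $\mathcal F_{\mathsf a,\mathsf b}$ for a specific choice of $(\mathsf a,\mathsf b)$: inequality~\eqref{logHLS} gives the point $(\mathsf a,\mathsf b)=(0,-2)$; inequality~\eqref{Ineq:LogHLS-l} gives $(\mathsf a,\mathsf b)=(2,-2)$; and~\eqref{Ineq:LogHLS-ll} gives the half-line $\mathsf a=\eta>1$, $\mathsf b=0$. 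Each of these three families is $1$-homogeneous in $\rho$ with a lower bound of the form $\mathcal C\,M$, so any convex combination of them with nonnegative coefficients summing to one again yields a bound of this form. The geometric heart of the argument is therefore to check that the region described in the lemma, the open triangle-like set $\mathsf a>0$, $-2\le\mathsf b<\min\{\mathsf a-1,2\mathsf a-2\}$, is contained in the convex hull of these three pieces of data, i.e.\ the segment from $(0,-2)$ to $(2,-2)$ together with the ray $\{(\eta,0):\eta>1\}$.

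The key steps, in order, would be as follows. First I would record the three ``extreme'' inequalities in the unified notation: $\mathcal F_{0,-2}[\rho]\ge-M(1+\log\pi)$ from~\eqref{logHLS}; $\mathcal F_{2,-2}[\rho]\ge -M$ from Lemma~\ref{Lem:LogHLSlim} (rewriting~\eqref{Ineq:LogHLS-l} using $\irtwo{\rho\log(\rho/M)}\ge0$ is \emph{not} needed --- one keeps the entropy term, noting that~\eqref{Ineq:LogHLS-l} does not contain it, so in fact one should use~\eqref{Ineq:LogHLS} at $\param=2$, or equivalently add a multiple of~\eqref{logHLS}); and $\mathcal F_{\eta,0}[\rho]\ge M\log\frac{\eta-1}{\pi}$ for $\eta>1$ from Lemma~\ref{Lem:LogHLSlim1}. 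Second, given a target $(\mathsf a,\mathsf b)$ in the admissible region, I would solve explicitly for nonnegative weights $\theta_1,\theta_2,\theta_3\ge0$ with $\theta_1+\theta_2+\theta_3=1$ and some $\eta>1$ such that $\theta_1(0,-2)+\theta_2(2,-2)+\theta_3(\eta,0)=(\mathsf a,\mathsf b)$; the condition $\mathsf b<2\mathsf a-2$ will force $\theta_3>0$ (hence the need for a genuine logarithmic Sobolev / Jensen contribution), while $\mathsf b<\mathsf a-1$ will be exactly what allows the chosen $\eta$ to satisfy $\eta>1$ and keep $\theta_3\le1$, and $\mathsf b\ge-2$ together with $\mathsf a>0$ will ensure $\theta_1,\theta_2\ge0$. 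Third, I would take the corresponding convex combination of the three inequalities and read off a finite value of $\mathcal C(\mathsf a,\mathsf b)$, something like $\mathcal C(\mathsf a,\mathsf b)=-\theta_1(1+\log\pi)-\theta_2+\theta_3\log\frac{\eta-1}{\pi}$ with the $\theta_i$ and $\eta$ expressed in terms of $\mathsf a,\mathsf b$. Finally I would treat the degenerate endpoint $\mathsf a=0$, $\mathsf b=-2$ separately as it is just~\eqref{logHLS} itself.

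The main obstacle I expect is the bookkeeping at the boundary of the admissible region and the correct identification of which two-dimensional cone is actually spanned. The subtlety is that the ray $\{(\eta,0):\eta>1\}$ is not compact, so its convex hull with the segment $[(0,-2),(2,-2)]$ is an unbounded region, and one has to verify carefully that the strict inequality $\mathsf b<\min\{\mathsf a-1,2\mathsf a-2\}$ is precisely the condition that places $(\mathsf a,\mathsf b)$ strictly inside this hull (with $\mathsf b\ge-2$, $\mathsf a>0$ giving the other two sides); the case distinction between $\mathsf a\le\tfrac32$ and $\mathsf a>\tfrac32$, i.e.\ whether $\mathsf a-1$ or $2\mathsf a-2$ is the binding constraint, is where the geometry bites. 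One must also make sure the entropy term $\irtwo{\rho\log(\rho/M)}$ is carried with coefficient exactly $1$ in the combination, which it is automatically since each of the three base inequalities carries it with coefficient $1$ --- here one has to be a little careful that~\eqref{Ineq:LogHLS-l} has \emph{no} entropy term, so to produce the data point $(2,-2)$ with an entropy coefficient $1$ one genuinely combines~\eqref{Ineq:LogHLS-l} with~\eqref{logHLS}, effectively reducing to the three data points $(0,-2)$, $(2,-2)$ (via~\eqref{Ineq:LogHLS} at $\param=2$, which already has the right entropy coefficient), and $(\eta,0)$. Once the convexity geometry is pinned down, the rest is a short explicit computation.
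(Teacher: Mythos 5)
Your overall strategy---taking nonnegative combinations of \eqref{logHLS}, \eqref{Ineq:LogHLS-l} and \eqref{Ineq:LogHLS-ll} so that the entropy term keeps coefficient $1$---is exactly the paper's, and your treatment of the sub-case $-2\le\mathsf b<0$ (convex combination of \eqref{logHLS} at $(0,-2)$ and \eqref{Ineq:LogHLS-ll} at $(\eta,0)$, with the constraint $\eta>1$ translating into $\mathsf b<2\,\mathsf a-2$) is correct. But there is a genuine error in how you locate \eqref{Ineq:LogHLS-l} in the $(\mathsf a,\mathsf b)$-plane, and it kills the case $\mathsf b>0$. Rearranged, \eqref{Ineq:LogHLS-l} reads
\[
2\irtwo{\log\(1+|x|^2\)\rho}-\frac2M\iint_{\R^2\times\R^2}\rho(x)\,\rho(y)\,\log|x-y|\,dx\,dy\ \ge\ M\,,
\]
so in the notation of $\mathcal F_{\mathsf a,\mathsf b}$ it contributes the direction $(\mathsf a,\mathsf b)=(2,+2)$ with entropy coefficient $0$---not the point $(2,-2)$. (Your fallback, \eqref{Ineq:LogHLS} at $\param=2$, gives $(4,2)$, again not $(2,-2)$.) Consequently the convex hull you describe---the segment from $(0,-2)$ to $(2,-2)$ together with the ray $\{(\eta,0):\eta>1\}$---is contained in the half-plane $\{\mathsf b\le0\}$ and cannot reach any point with $\mathsf b>0$, whereas the lemma claims, for instance, $(\mathsf a,\mathsf b)=(3,1)$.

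The repair is to use \eqref{Ineq:LogHLS-l} not as a vertex of a convex combination but as a ``free'' summand: since it carries no entropy term, you may add $\tfrac{\mathsf b}2$ times \eqref{Ineq:LogHLS-l} (any nonnegative multiple) to a full copy of \eqref{Ineq:LogHLS-ll} with $\eta=\mathsf a-\mathsf b$; the entropy coefficient remains $1$, the potential and convolution coefficients become $\mathsf a$ and $\mathsf b$, and the requirement $\eta>1$ is precisely $\mathsf b<\mathsf a-1$. This is the paper's argument for $\mathsf b>0$, and it explains why the binding constraint switches from $2\,\mathsf a-2$ to $\mathsf a-1$ at $\mathsf b=0$, i.e.\ at $\mathsf a=1$, not at $\mathsf a=3/2$ as you suggest. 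Two smaller points: the stated bound $\mathcal F_{2,-2}[\rho]\ge-M$ does not follow from Lemma~\ref{Lem:LogHLSlim} (which has the opposite sign on the convolution term), and the endpoint $\mathsf b=-2$ with $\mathsf a>0$ is most easily handled by monotonicity in $\mathsf a$, adding a nonnegative multiple of $\irtwo{\log\(1+|x|^2\)\rho}\ge0$ on top of \eqref{logHLS}.
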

The proof for $-2\le\mathsf b<0$ and $\mathsf a>1-\mathsf b/2$ follows from the case $\mathsf a=1-\mathsf b/2$, which is treated in Lemmas~\ref{lemma1} and~\ref{lemma2} below, but we give the argument here nevertheless, since it is simpler.
\begin{proof} The case $\mathsf a=0$ and $\mathsf b=-2$ corresponds to~\eqref{logHLS}. The case $\mathsf a=\eta>1$ and $\mathsf b=0$ is~\eqref{Ineq:LogHLS-ll}.

If $\mathsf b<0$, the condition $\mathsf b<2\,\mathsf a-2$ arises by combining~\eqref{logHLS} and~\eqref{Ineq:LogHLS-ll}, respectively multiplied by $-\mathsf b/2$ and $1+\mathsf b/2$, with $\mathsf a=(1+\mathsf b/2)\,\eta$ for any $\eta>1$. In that case,~\eqref{Ineq:LogHLSimproved} holds with
\begin{multline*}
\mathcal C(\mathsf a,\mathsf b)=M\,(1+\log\pi)\,\frac{\mathsf b}2+M\,\log\(\frac{\eta-1}\pi\)\(1+\frac{\mathsf b}2\)\\
=M\,(1+\log\pi)\,\frac{\mathsf b}2+M\,\log\(\frac{2\,\mathsf a-2-\mathsf b}{\pi\,(\mathsf b+2)}\)\frac{\mathsf b+2}2\,.
\end{multline*}
If $\mathsf b>0$, we sum~\eqref{Ineq:LogHLS-l} with a coefficient $\mathsf b/2$ and~\eqref{Ineq:LogHLS-ll} with coefficient $1$ and $\eta=\mathsf a-\mathsf b>1$. In that case,~\eqref{Ineq:LogHLSimproved} holds with
\[
\mathcal C(\mathsf a,\mathsf b)=M\,\frac{\mathsf b}2+M\,\log\(\frac{\eta-1}\pi\)=M\,\frac{\mathsf b}2+M\,\log\(\frac{\mathsf a-\mathsf b-1}\pi\)\,.
\]
\end{proof}
With $M=1$, notice that
\[
\mathcal F_{\mathsf a,\mathsf b}[\rho]=\irtwo{\rho\,\log\rho}+\mathsf a\irtwo{\log\(1+|x|^2\)\rho}+2\,\pi\,\mathsf b\irtwo{\rho\,(-\Delta)^{-1}\rho}\,.
\]
\begin{lemma}\label{Prop:Unbounded} If either $\mathsf a<0$ or $\mathsf b<-2$ or $\mathsf b>\min\{\mathsf a-1,2\,\mathsf a-2\}$ or $(\mathsf a,\mathsf b)=(1,0)$, then
\[
\inf_{\rho\,\in\mathcal X_1}\mathcal F_{\mathsf a,\mathsf b}[\rho]=-\infty\,.
\]
\end{lemma}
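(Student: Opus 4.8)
The plan is to construct explicit one-parameter families of test functions in $\mathcal X_1$ and show that $\mathcal F_{\mathsf a,\mathsf b}$ diverges to $-\infty$ along them, treating the various regions of the parameter plane separately by exploiting different scaling behaviours of the three terms in $\mathcal F_{\mathsf a,\mathsf b}$. The natural test functions are concentrations and dilations of a fixed profile, $\rho_\lambda(x)=\lambda^{-2}\rho_0(x/\lambda)$, for which $\irtwo{\rho_\lambda\log\rho_\lambda}=\irtwo{\rho_0\log\rho_0}-2\log\lambda$ and $\iint\rho_\lambda(x)\rho_\lambda(y)\log|x-y|=\iint\rho_0(x)\rho_0(y)\log|x-y|+\log\lambda$, while $\irtwo{\log(1+|x|^2)\,\rho_\lambda}\sim 2\log\lambda$ as $\lambda\to+\infty$ and stays bounded as $\lambda\to0_+$. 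Thus along dilations the leading behaviour is $(-2-\mathsf b)\log\lambda$ near $\lambda\to0_+$ and $(2\mathsf a-2-\mathsf b)\log\lambda$ near $\lambda\to+\infty$: sending $\lambda\to0_+$ settles $\mathsf b<-2$, and sending $\lambda\to+\infty$ settles $\mathsf b>2\mathsf a-2$. The case $\mathsf a<0$ is handled by translating mass off to infinity (or by dilation as $\lambda\to+\infty$), since then the potential term $\mathsf a\irtwo{\log(1+|x|^2)\rho}$ can be driven to $-\infty$ faster than the other terms grow; concretely one can take a fixed-shape bump centered at distance $R$ from the origin and let $R\to+\infty$, for which the potential term behaves like $2\mathsf a\log R\to-\infty$ while the entropy is constant and the self-interaction term is also constant.

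Next I would treat the remaining region $\mathsf b>\mathsf a-1$ (with $\mathsf b\le 2\mathsf a-2$, so $\mathsf a>1$) and the borderline point $(1,0)$. Here the relevant instability is not a pure dilation but a ``spreading'' or a change of tail exponent: the point is that the combination entropy-plus-potential, namely $\mathsf J_{\mathsf a}$, is minimized (for $\mathsf a>1$) by $\rho_{\mathsf a}$ with polynomial tail $|x|^{-2\mathsf a}$, and on such slowly decaying profiles the self-interaction $-\tfrac{\mathsf b}{M}\iint\rho\rho\log|x-y|$ becomes very negative (i.e.\ $\iint\rho\rho\log|x-y|$ becomes large positive) because of long-range contributions. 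The clean way to make this quantitative is to test with $\rho=\rho_\zeta$ (the family from Lemma~\ref{Lem:LogHLSlim1}) and let $\zeta\to1_+$. For this family, $\irtwo{\rho_\zeta\log\rho_\zeta}+\eta\irtwo{\log(1+|x|^2)\rho_\zeta}$ was computed in the proof of Lemma~\ref{Lem:LogHLSlim1} to equal $\log\big(\tfrac{\zeta-1}{\pi}\big)-\tfrac{\zeta-\eta}{\zeta-1}$, so with $\eta=\mathsf a$ the entropy-plus-potential part of $\mathcal F_{\mathsf a,\mathsf b}[\rho_\zeta]$ behaves like $-\tfrac{\mathsf a-1}{\zeta-1}+\log(\zeta-1)+O(1)$, hence like $-\tfrac{\mathsf a-1}{\zeta-1}\to-\infty$ to leading order when $\mathsf a>1$ (and $=\log(\zeta-1)+O(1)\to-\infty$ when $\mathsf a=1$, which covers the point $(1,0)$ once we check the self-interaction term is $O(1)$ there). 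It remains to estimate $\iint\rho_\zeta(x)\rho_\zeta(y)\log|x-y|\,dx\,dy$ as $\zeta\to1_+$; I expect this to be $\sim \dfrac{c}{\zeta-1}$ for some explicit $c>0$ (the dominant contribution coming from the region $|x|,|y|$ of order up to $(\zeta-1)^{-1/2}$ or larger, where $\log|x-y|$ is large and positive), so that $-\tfrac{\mathsf b}{M}$ times this is $\sim -\dfrac{c\,\mathsf b}{\zeta-1}$; combining, $\mathcal F_{\mathsf a,\mathsf b}[\rho_\zeta]\sim -\dfrac{(\mathsf a-1)+c\,\mathsf b\ \text{(suitably normalized)}}{\zeta-1}$, and one checks that under $\mathsf b>\mathsf a-1$ the bracket is negative, forcing divergence to $-\infty$.

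The main obstacle I anticipate is getting the precise asymptotics of $\iint\rho_\zeta\rho_\zeta\log|x-y|\,dx\,dy$ as $\zeta\to1_+$ with the right constant, so that the threshold in the combined coefficient lands exactly at $\mathsf b=\mathsf a-1$ rather than at some other slope; this requires carefully splitting the double integral into near-diagonal, bulk, and far-field regions and tracking which region dominates as the tail exponent $\zeta$ approaches the critical value $1$ where $\rho_\zeta$ fails to be integrable. A cleaner alternative that avoids this computation is to use the scaling/convolution identities directly on a truncated or rescaled version of $\rho_\zeta$: writing $\rho=\rho_\star^{(R)}$ for $\rho_\star$ restricted to $\{|x|\le R\}$ and renormalized, one has $\iint\rho\rho\log|x-y|\sim \log R$ and the potential term $\sim \mathsf a\log R$ while the entropy is bounded, giving leading behaviour $(\mathsf a - \mathsf b)\log R$ — but this only reaches $\mathsf b > \mathsf a$, not $\mathsf b>\mathsf a-1$, so the sharp threshold genuinely needs the $\rho_\zeta$ family with its logarithmic correction, and pinning down that correction is the crux. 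Finally I would assemble the four cases, noting that $\min\{\mathsf a-1,2\mathsf a-2\}$ equals $2\mathsf a-2$ for $\mathsf a\le1$ and $\mathsf a-1$ for $\mathsf a\ge1$, so that the dilation argument ($\mathsf b>2\mathsf a-2$) and the $\rho_\zeta$ argument ($\mathsf b>\mathsf a-1$, $\mathsf a>1$) together cover $\mathsf b>\min\{\mathsf a-1,2\mathsf a-2\}$, and the point $(1,0)$ is the common boundary handled by the $\zeta\to1_+$ limit with its logarithmic divergence.
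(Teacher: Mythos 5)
Your handling of the cases $\mathsf a<0$ (translation to infinity), $\mathsf b<-2$ (concentration, $\lambda\to0_+$), $\mathsf b>2\,\mathsf a-2$ (dilation, $\lambda\to+\infty$) and of the point $(1,0)$ (where $\mathcal F_{1,0}=\mathsf J_1$, so Lemma~\ref{Lem:LogHLSlim1}\,(ii) applies directly) is correct and coincides with the paper's argument. The genuine gap is exactly where you feared it: the family $\rho_\zeta$ with $\zeta\to1_+$ does \emph{not} reach the threshold $\mathsf b=\mathsf a-1$, and no choice of constant $c$ saves it. One can compute the asymptotics you left open. By Newton's theorem, with the cumulative mass $m(r)=1-(1+r^2)^{1-\zeta}$ of the radial function $\rho_\zeta$,
\begin{equation*}
\iint_{\R^2\times\R^2}\rho_\zeta(x)\,\rho_\zeta(y)\,\log|x-y|\,dx\,dy=2\irtwo{\rho_\zeta\,\log|x|\;m(|x|)}=(\zeta-1)\int_1^\infty\frac{\log(u-1)}{u^\zeta}\,\big(1-u^{1-\zeta}\big)\,du
\end{equation*}
after the substitution $u=1+r^2$. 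Since $\int_1^\infty u^{-\zeta}\log u\,du=(\zeta-1)^{-2}$ while $\int_1^\infty u^{1-2\zeta}\log u\,du=\tfrac14\,(\zeta-1)^{-2}$, and the replacement of $\log(u-1)$ by $\log u$ costs only $O(1)$ after multiplication by $\zeta-1$, the double integral equals $\tfrac3{4\,(\zeta-1)}+O(1)$. Combining this with the identities from the proof of Lemma~\ref{Lem:LogHLSlim1} gives
\begin{equation*}
\mathcal F_{\mathsf a,\mathsf b}[\rho_\zeta]=\frac{\mathsf a-1-\tfrac34\,\mathsf b}{\zeta-1}+\log(\zeta-1)+O(1)\,,
\end{equation*}
which tends to $-\infty$ only when $\mathsf b\ge\tfrac43\,(\mathsf a-1)$. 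The threshold of this family therefore lands at slope $4/3$, not $1$, and the range $\mathsf a-1<\mathsf b<\tfrac43\,(\mathsf a-1)$ (nonempty whenever $\mathsf a>1$) is left uncovered.

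The construction you are missing is a \emph{two-scale} trial function rather than a single power-law tail: take $\rho$ of unit mass supported in the annulus $1\le|x|\le2$ and set $\rho_{\varepsilon,\lambda}=(1-\varepsilon)\,\rho+\varepsilon\,\lambda^2\rho(\lambda\,\cdot)$, with $\varepsilon\in(0,1)$ small but fixed and $\lambda\to0_+$, so that a mass $\varepsilon$ is sent to distance $\sim1/\lambda$ while the bulk stays put. The decisive feature is the \emph{cross term} in the double integral, which contributes $2\,\varepsilon\,(1-\varepsilon)\,\log(1/\lambda)$ to $\iint\rho\,\rho\,\log|x-y|$ -- first order in the displaced mass $\varepsilon$ -- while the entropy and potential costs, $2\,\varepsilon\,\log\lambda$ and $-\,2\,\mathsf a\,\varepsilon\,\log\lambda$ respectively, are also first order in $\varepsilon$. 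The coefficient of $\log\lambda$ in $\mathcal F_{\mathsf a,\mathsf b}[\rho_{\varepsilon,\lambda}]$ is then $2\,\varepsilon\,\big((1-\varepsilon/2)\,\mathsf b+1-\mathsf a\big)$, which can be made positive for small $\varepsilon$ precisely when $\mathsf b>\mathsf a-1$; letting $\lambda\to0_+$ concludes. In the family $\rho_\zeta$ the mass is spread continuously over all scales and the shell at radius $r$ only interacts logarithmically with the fraction $m(r)$ of the mass lying below it, which is how the factor $3/4$ (instead of $1$) per unit of entropy-plus-potential cost arises. Replacing your $\zeta\to1_+$ computation by this two-bump splitting closes the remaining case $\mathsf a-1<\mathsf b\le2\,\mathsf a-2$ and, at $\varepsilon$ fixed, degenerates gracefully at the open borderline $\mathsf b=\mathsf a-1$.
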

In Lemma~\ref{Prop:Unbounded}, there is no loss of generality in assuming that $M=1$. Under the assumptions on $(\mathsf a,\mathsf b)$ of Lemma~\ref{Prop:Unbounded}, Inequality~\eqref{Ineq:LogHLSimproved} does not hold for some $\mathcal C(\mathsf a,\mathsf b)>-\infty$. In that case, we shall simply write $\mathcal C(\mathsf a,\mathsf b)=-\infty$. See Fig.~\ref{Fig0}.

\begin{proof} For an arbitrary $\rho\in\mathcal X_1$, \emph{i.e.}, $\rho\in\mathrm L^1_+(\R^2)$ such that $\nrm\rho1=1$, let $\rho_{x_0}(x):=\rho(x-x_0)$. Since
\[
\irtwo{\log\(1+|x|^2\)\rho_{x_0}(x)}\sim2\,\log|x_0|\irtwo\rho\quad\mbox{as}\quad|x_0|\to+\infty
\]
and all other integrals are unchanged, the conclusion is straightforward if $\mathsf a<0$.

Assume now that $\rho\in\mathcal X_1$ is such that $\rho\,\log\rho$ and $\log\(1+|x|^2\)\rho$ are integrable, and let $\rho_\lambda(x)=\lambda^2\,\rho(\lambda\,x)$, for any $x\in\R^2$. We have
\begin{align*}
&\irtwo{\rho_\lambda\,\log\rho_\lambda}=\irtwo{\rho\,\log\rho}+2\,\log\lambda\,,\\
&\irtwo{\log\(1+|x|^2\)\rho_\lambda}=\irtwo{\log\(1+\lambda^{-2}\,|x|^2\)\rho}\,,\\
&\irtwo{\rho_\lambda\,(-\Delta)^{-1}\rho_\lambda}=\irtwo{\rho\,(-\Delta)^{-1}\rho}+\frac{\log\lambda}{2\,\pi}\,.
\end{align*}
As $\lambda\to+\infty$, we obtain that $\mathcal F_{\mathsf a,\mathsf b}[\rho_\lambda]\sim(\mathsf b+2)\,\log\lambda$, which proves our statement if $\mathsf b<-2$.

Assume additionally that $\rho(x) =0$ if $|x|\not\in[1,2]$. Since on any compact set of $\R^2\setminus\{0\}$, we have that $1+\lambda^{-2}\,|x|^2\sim\lambda^{-2}\,|x|^2$ as $\lambda\to0_+$ and deduce that
\[
\irtwo{\log\(1+|x|^2\)\rho_\lambda(x)}=\irtwo{\log\(1+\lambda^{-2}\,|x|^2\)\rho(x)}\sim-\,2\,\log\lambda\,.
\]
As $\lambda\to0_+$, we obtain that $\mathcal F_{\mathsf a,\mathsf b}[\rho_\lambda]\sim(\mathsf b+2-2\,\mathsf a)\,\log\lambda$, which proves our statement if $\mathsf b+2-2\,\mathsf a>0$.

Now, still assuming that $\rho(x) =0$ if $|x|\not\in[1,2]$, let
\[
\rho_{\varepsilon,\lambda}(x)=(1-\varepsilon)\,\rho(x)+\lambda^2\,\varepsilon\,\rho\(\lambda\,x\)
\]
with parameters $(\varepsilon,\lambda)\in(0,1)^2$. Using that the supports of $\rho$ and $\rho_{\lambda}$ decouple if $\lambda < 1/2$, we have, for any given $\varepsilon\in(0,1)$, as $\lambda\to0_+$
\[
\irtwo{\rho_{\varepsilon,\lambda}\,\log\rho_{\varepsilon,\lambda}}=\irtwo{\rho\,\log\rho}+\varepsilon\,\log\varepsilon+(1-\varepsilon)\,\log(1-\varepsilon)+2\,\varepsilon\,\log\lambda\,,
\]
\begin{multline*}
\irtwo{\log\(1+|x|^2\)\rho_{\varepsilon,\lambda}}=(1-\varepsilon)\irtwo{\log\(1+|x|^2\)\rho}+2\,\varepsilon\irtwo{\log|x|\,\rho}\\
-2\,\varepsilon\,\log\lambda+o\(\log\lambda\)\,,
\end{multline*}
\begin{multline*}
\irtwo{\rho_{\varepsilon,\lambda}\,(-\Delta)^{-1}\rho_{\varepsilon,\lambda}}=\(\varepsilon^2+(1-\varepsilon)^2\)\irtwo{\rho\,(-\Delta)^{-1}\rho}+\varepsilon^2\,\frac{\log\lambda}{2\,\pi}\\
-\frac{\varepsilon\,(1-\varepsilon)}\pi\irtwo{\log|x|\,\rho(x)}+\frac{\varepsilon\,(1-\varepsilon)}\pi\,\log\lambda+o\(\log\lambda\).
\end{multline*}
Thus,
\begin{multline*}
\irtwo{\rho_{\varepsilon,\lambda}\,\log\rho_{\varepsilon,\lambda}}+\mathsf a\irtwo{\log\(1+|x|^2\)\rho_{\varepsilon,\lambda}}+2\,\pi\,\mathsf b\irtwo{\rho_{\varepsilon,\lambda}\,(-\Delta)^{-1}\rho_{\varepsilon,\lambda}}\\
\sim2\,\varepsilon\(\(1-\frac\varepsilon2\)\mathsf b+1-\mathsf a\)\,\log\lambda\quad\mbox{as}\quad\lambda\to0_+\,.
\end{multline*}
This again proves our statement if $\mathsf b+1-\mathsf a>0$, because $\(1-\varepsilon/2\)\mathsf b+1-\mathsf a$ can be made positive for $\varepsilon>0$, small enough.
\end{proof}

The proof of Theorem \ref{Thm:FreeEnergy} in the case $\mathsf b=2\,(\mathsf a-1)\in[-2,0)$ is based on two ingredients exposed in Lemma~\ref{lemma1} and  Lemma~\ref{lemma2}. The first ingredient relates the minimization of $\mathcal F_{\mathsf a,2\,(\mathsf a-1)}$ to a simpler, scale-invariant minimization problem. Let
\[
\mathcal G_{\mathsf a}[\rho] := \int_{\R^2} \rho\,\log\rho\,dx + 2\,\mathsf a \int_{\R^2}\log |x|\,\rho\,dx + 2\,(\mathsf a-1) \iint_{\R^2\times\R^2} \rho(x)\,\log\frac{1}{|x-y|}\,\rho(y)\,dx\,dy
\]
and
\[
\mathcal K(\mathsf a) := \inf\left\{ \mathcal G_{\mathsf a}[\rho]\,:\,\rho\ge0 \,,\ \int_{\R^2} \rho\,dx = 1 \right\} .
\]
\begin{lemma}\label{lemma1} Let $0\le\mathsf a<1$. Then
\[
\mathcal C\big(\mathsf a,2\,(\mathsf a-1)\big) = \mathcal K(\mathsf a) \,.
\]
Moreover, if $\mathsf a>0$ there is no minimizer for $\mathcal C\big(\mathsf a,2\,(\mathsf a-1)\big)$.
\end{lemma}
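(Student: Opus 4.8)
The plan is to establish the identity $\mathcal C\big(\mathsf a,2\,(\mathsf a-1)\big) = \mathcal K(\mathsf a)$ by a scaling argument that passes between the inhomogeneous functional $\mathcal F_{\mathsf a,2\,(\mathsf a-1)}$ on $\mathcal X_1$ and the scale-invariant functional $\mathcal G_{\mathsf a}$. First I would record the elementary relation between the two functionals. Writing the self-consistent term via the logarithmic kernel, $\mathcal F_{\mathsf a,2\,(\mathsf a-1)}[\rho] = \irtwo{\rho\,\log\rho} + \mathsf a\irtwo{\log\(1+|x|^2\)\rho} + 2\,(\mathsf a-1)\iint_{\R^2\times\R^2}\rho(x)\,\log\frac1{|x-y|}\,\rho(y)\,dx\,dy$, and noting that $\log\(1+|x|^2\) = 2\,\log|x| + \log\(1+|x|^{-2}\)$, I would express $\mathcal F_{\mathsf a,2\,(\mathsf a-1)}[\rho] = \mathcal G_{\mathsf a}[\rho] + \mathsf a\irtwo{\log\(1+|x|^{-2}\)\rho}$. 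Since $\log\(1+|x|^{-2}\)\ge0$ and $\mathsf a\ge0$, this immediately gives $\mathcal F_{\mathsf a,2\,(\mathsf a-1)}[\rho]\ge\mathcal G_{\mathsf a}[\rho]\ge\mathcal K(\mathsf a)$ for all $\rho\in\mathcal X_1$, hence $\mathcal C\big(\mathsf a,2\,(\mathsf a-1)\big)\ge\mathcal K(\mathsf a)$.

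For the reverse inequality I would exploit the scale-invariance of $\mathcal G_{\mathsf a}$. The point is that under the dilation $\rho_\lambda(x) = \lambda^2\,\rho(\lambda x)$ one computes, using the three scaling formulas already derived in the proof of Lemma~\ref{Prop:Unbounded} together with the scaling $\irtwo{\log|x|\,\rho_\lambda} = \irtwo{\log|x|\,\rho} - \log\lambda$, that $\mathcal G_{\mathsf a}[\rho_\lambda] = \mathcal G_{\mathsf a}[\rho] + \big(2 + 2\,(\mathsf a-1) - 2\,\mathsf a\big)\log\lambda = \mathcal G_{\mathsf a}[\rho]$, confirming scale-invariance. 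Now given any $\rho\in\mathcal X_1$ with $\mathcal G_{\mathsf a}[\rho]$ finite, I would replace $\rho$ by $\rho_\lambda$ with $\lambda\to0_+$: by dominated convergence $\irtwo{\log\(1+\lambda^{-2}|x|^2\)\rho_\lambda} = \irtwo{\log\(1+|x|^{-2}\,\lambda^2\,\cdot\,\big(|x|^2/\lambda^2 \text{ substitution}\big)\)}$ — more cleanly, $\irtwo{\log\(1+|x|^{-2}\)\rho_\lambda(x)} = \irtwo{\log\(1+\lambda^2\,|x|^{-2}\)\rho(x)}\to0$ as $\lambda\to0_+$. Therefore $\mathcal F_{\mathsf a,2\,(\mathsf a-1)}[\rho_\lambda] = \mathcal G_{\mathsf a}[\rho] + o(1)$, which shows $\mathcal C\big(\mathsf a,2\,(\mathsf a-1)\big)\le\mathcal G_{\mathsf a}[\rho]$; taking the infimum over $\rho$ yields $\mathcal C\big(\mathsf a,2\,(\mathsf a-1)\big)\le\mathcal K(\mathsf a)$, and combined with the first inequality this proves the claimed equality. (I would check separately that the reduction to $\rho$ with all three terms finite is harmless, since if any term is $-\infty$ both sides are $-\infty$.)

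For the non-existence of a minimizer when $\mathsf a>0$, I would argue by contradiction. Suppose $\rho$ attains $\mathcal C\big(\mathsf a,2\,(\mathsf a-1)\big)$ in $\mathcal X_1$. Then from $\mathcal F_{\mathsf a,2\,(\mathsf a-1)}[\rho] = \mathcal G_{\mathsf a}[\rho] + \mathsf a\irtwo{\log\(1+|x|^{-2}\)\rho}$ and the chain $\mathcal F_{\mathsf a,2\,(\mathsf a-1)}[\rho]\ge\mathcal G_{\mathsf a}[\rho]\ge\mathcal K(\mathsf a) = \mathcal C\big(\mathsf a,2\,(\mathsf a-1)\big)$, equality forces $\mathsf a\irtwo{\log\(1+|x|^{-2}\)\rho} = 0$; since $\mathsf a>0$ and the integrand is strictly positive a.e., this forces $\rho\equiv0$, contradicting $\nrm\rho1 = 1$. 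Hence no minimizer exists.

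The main obstacle I anticipate is not the inequalities themselves but the integrability bookkeeping: one must make sure that $\mathcal G_{\mathsf a}[\rho]$, $\irtwo{\log|x|\,\rho}$, and the double integral are well-defined (not of the form $\infty-\infty$) for the competitors used, and that the dominated-convergence step $\irtwo{\log\(1+\lambda^2|x|^{-2}\)\rho}\to0$ is legitimate. For the latter one has the pointwise bound $\log\(1+\lambda^2|x|^{-2}\)\le\log\(1+|x|^{-2}\)$ for $\lambda\le1$, so it suffices to know that $\log\(1+|x|^{-2}\)\rho\in\mathrm L^1$; this follows from finiteness of $\irtwo{\log\(1+|x|^2\)\rho}$ near infinity and an easy argument near the origin (or one reduces to a dense class of compactly supported $\rho$ bounded away from $0$, as is done elsewhere in the paper). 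Everything else is the routine scaling computation collected above.
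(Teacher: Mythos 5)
Your proposal is correct and follows essentially the same route as the paper: the lower bound $\mathcal F_{\mathsf a,2(\mathsf a-1)}\ge\mathcal G_{\mathsf a}$ via $\log(1+|x|^2)\ge 2\log|x|$, the upper bound by dilating test densities so that the (scale-invariant) functional $\mathcal G_{\mathsf a}$ is unchanged while the nonnegative remainder $\mathsf a\irtwo{\log(1+|x|^{-2})\,\rho_\lambda}$ vanishes in the limit, and non-existence of a minimizer from strictness of the pointwise inequality. The only cosmetic difference is that the paper scales compactly supported $\sigma$ (supported away from the origin) outward and reads off the vanishing $\log\lambda$ coefficient, whereas you keep general $\rho$ and invoke dominated convergence; both require the same approximation/integrability bookkeeping, which you correctly flag.
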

\begin{proof} Since $\log\(1+|x|^2\)> 2\,\log|x|$, we immediately obtain $\mathcal F_{\mathsf a,2\,(\mathsf a-1)}[\rho]> \mathcal G_{\mathsf a}[\rho]$ whenever $\rho\not\equiv 0$ (and the functionals are finite). This implies that $\mathcal C\big(\mathsf a,2\,(\mathsf a-1)\big) \ge\mathcal K(\mathsf a)$ and that, if $\mathsf a>0$ and if $\mathcal F_{\mathsf a,2\,(\mathsf a-1)}$ has a minimizer, then $\mathcal C\big(\mathsf a,2\,(\mathsf a-1)\big) > \mathcal K(\mathsf a)$.

We show now the opposite inequality $\mathcal C\big(\mathsf a,2\,(\mathsf a-1)\big) \le\mathcal K(\mathsf a)$, which will complete the proof. Let $\sigma\ge0$ with $\int_{\R^2}\sigma\,dx = 1$ and with compact support not containing the origin. Consider $\rho_\lambda(x) = \lambda^{-2} \sigma(x/\lambda)$ with $\lambda\gg 1$. Then, as in the proof of Lemma~\ref{Prop:Unbounded},
\begin{multline*}
\mathcal F_{\mathsf a,\mathsf b}[\rho_\lambda] = (-2+2\,\mathsf a-\mathsf b)\log\lambda \\
+ \int_{\R^2} \sigma\,\log\sigma\,dx +\mathsf a\int_{\R^2}\log\(\lambda^{-2} + |x|^2\) \sigma(x)\,dx\\
 +\mathsf  b \iint_{\R^2\times\R^2} \sigma(x)\,\log\frac{1}{|x-y|}\,\sigma(y)\,dx\,dy \,.
\end{multline*}
If $\mathsf b=2\,(\mathsf a-1)$, then the coefficient of $\log\lambda$ vanishes and we obtain
\[
\mathcal C\big(\mathsf a,2\,(\mathsf a-1)\big) \le\liminf_{\lambda\to\infty} \mathcal F_{\mathsf a,2\,(\mathsf a-1)}[\rho_\lambda] = \mathcal G_{\mathsf a}[\sigma] \,.
\]
Taking the infimum over all $\sigma$ (and removing the support assumptions by an approximation argument), we obtain $\mathcal C\big(\mathsf a,2\,(\mathsf a-1)\big) \le\mathcal K(\mathsf a)$, as claimed.	
\end{proof}

\begin{lemma}\label{lemma2} Let $0\le\mathsf a<1$. Then
\[
\mathcal K(\mathsf a) = -\log\(\frac{e\,\pi}{1-\mathsf a}\) \,.
\]
For $\mathsf a>0$ the infimum $\mathcal K(\mathsf a)$ is achieved if and only if, for some $\lambda>0$,
\[
\rho(x) = \frac{1-\mathsf a}{\pi}\, \frac{\lambda^2}{|x|^{2\,\mathsf a}\(\lambda^2 + |x|^{2\,(1-\mathsf a)}\)^2} \,.
\]
\end{lemma}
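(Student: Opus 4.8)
The plan is to reduce the computation of $\mathcal K(\mathsf a)$ to the sharp constant in the logarithmic Hardy--Littlewood--Sobolev inequality~\eqref{logHLS}, which with $M=1$ says precisely that $\mathcal G_0[\sigma]:=\irtwo{\sigma\,\log\sigma}+2\iint_{\R^2\times\R^2}\sigma(x)\,\sigma(y)\,\log|x-y|\,dx\,dy\ge-\log(e\,\pi)$ for every $\sigma\ge0$ with $\irtwo\sigma=1$, with equality if and only if $\sigma$ is a translate of $x\mapsto\frac1\pi\,\lambda^2\,(\lambda^2+|x|^2)^{-2}$. After rewriting $2\,(\mathsf a-1)\,\log\frac1{|x-y|}=2\,(1-\mathsf a)\,\log|x-y|$, with $1-\mathsf a>0$, I would first reduce to radial densities and then use an explicit radial change of variables identifying $\mathcal G_{\mathsf a}$ with $\mathcal G_0$ up to an additive constant.

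\emph{Step 1: reduction to radial densities.} For $\rho\ge0$ with $\irtwo\rho=1$, the symmetric decreasing rearrangement $\rho^*$ leaves $\irtwo{\rho\,\log\rho}$ unchanged, does not increase $2\,\mathsf a\irtwo{\log|x|\,\rho}$ (by the rearrangement inequality $\irtwo{\log|x|\,\rho}\ge\irtwo{\log|x|\,\rho^*}$, strict unless $\rho=\rho^*$ since $|x|\mapsto\log|x|$ is strictly increasing and $\mathsf a>0$), and does not increase $2\,(1-\mathsf a)\iint_{\R^2\times\R^2}\rho(x)\,\rho(y)\,\log|x-y|\,dx\,dy$ by the rearrangement inequality for the logarithmic kernel \cite[Lemma~2]{MR1143664}. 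Hence $\mathcal G_{\mathsf a}[\rho]\ge\mathcal G_{\mathsf a}[\rho^*]$, so it is enough to minimize over radial $\rho$, and any minimizer with $\mathsf a>0$ must be radial about the origin.

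\emph{Step 2: change of variables and conclusion.} Given a radial $\rho$ with $\irtwo\rho=1$, let $\sigma$ be the (radial) density of the push-forward of $\rho\,dx$ under $x\mapsto|x|^{-\mathsf a}\,x$, so that $|y|=|x|^{1-\mathsf a}$ along this map and $\irtwo\sigma=1$; writing $\rho(x)=g(|x|)$ and $\sigma(y)=h(|y|)$ this amounts to $h(r^{1-\mathsf a})=g(r)\,r^{2\,\mathsf a}/(1-\mathsf a)$. Using $dx=r\,dr\,d\theta$ and the identity $\frac1{2\,\pi}\int_0^{2\pi}\log|x-y|\,d\theta_y=\log\max(|x|,|y|)$, valid for radial configurations --- which is why the non-conformality of this map does no harm here --- a direct computation gives $\irtwo{\sigma\,\log\sigma}=\irtwo{\rho\,\log\rho}+2\,\mathsf a\irtwo{\log|x|\,\rho}-\log(1-\mathsf a)$ while the interaction term is simply multiplied by $1-\mathsf a$, so that $\mathcal G_0[\sigma]=\mathcal G_{\mathsf a}[\rho]-\log(1-\mathsf a)$. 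Combined with~\eqref{logHLS} and Step~1, this gives $\mathcal K(\mathsf a)\ge-\log(e\,\pi)+\log(1-\mathsf a)=-\log\big(e\,\pi/(1-\mathsf a)\big)$. For the matching upper bound and the sufficiency part of the characterization, I would take $\sigma(y)=\frac1\pi\,\lambda^2\,(\lambda^2+|y|^2)^{-2}$ and invert the change of variables, which produces exactly $\rho(x)=\frac{1-\mathsf a}\pi\,\lambda^2\,|x|^{-2\,\mathsf a}\,(\lambda^2+|x|^{2\,(1-\mathsf a)})^{-2}$, with $\mathcal G_{\mathsf a}[\rho]=-\log\big(e\,\pi/(1-\mathsf a)\big)$. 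For necessity when $\mathsf a>0$, Step~1 forces any minimizer to be radial about the origin, hence (via the change of variables) to correspond to a radial minimizer $\sigma$ of~\eqref{logHLS}; by the equality case of~\eqref{logHLS} such a $\sigma$ must be centered at the origin, and inverting the map returns the displayed $\rho$.

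The main obstacle is the rearrangement in Step~1: the logarithmic kernel changes sign and is not locally integrable at infinity, so the inequality $\iint\rho(x)\,\rho(y)\,\log|x-y|\ge\iint\rho^*(x)\,\rho^*(y)\,\log|x-y|$ requires the careful analysis of \cite[Lemma~2]{MR1143664}, and one must track the finiteness of all the integrals involved --- reading $\mathcal G_{\mathsf a}$ and $\mathcal G_0$ as functionals with values in $(-\infty,+\infty]$ and treating the general case by approximation with densities that are compactly supported and bounded away from the origin --- so that the identity $\mathcal G_0[\sigma]=\mathcal G_{\mathsf a}[\rho]-\log(1-\mathsf a)$ remains meaningful. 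Once Step~1 is secured, Step~2 is a routine substitution.
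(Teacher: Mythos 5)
Your proposal is correct and follows essentially the same route as the paper: a reduction to radial densities, the radial change of variables $|z|=|x|^{1-\mathsf a}$ combined with Newton's theorem for the angular average of the logarithmic kernel, and the sharp logarithmic Hardy--Littlewood--Sobolev inequality~\eqref{logHLS}. The only differences are cosmetic (you normalize the transformed density to unit mass, whereas the paper works with a density of mass $1-\mathsf a$ and adjusts the constant accordingly) and you are slightly more explicit about the strictness of the rearrangement step needed for the characterization of equality when $\mathsf a>0$.
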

The idea of the proof is to apply a change of variables and to reduce the result to the case $\mathsf a=0$.
\begin{proof}[Proof of Lemma \ref{lemma2}]
By symmetric decreasing rearrangement it suffices to bound $\mathcal G_{\mathsf a}[\rho]$ from below for radial decreasing $\rho$. In fact, in the following we only use that $\rho$ is radial, and we use this in order to apply Newton's theorem. We set $\tilde\rho(x) := |x|^{2\,\mathsf a}\,\rho(x)$ and then we define a radial function $\tau$ on $\R^2$ by $\tau(z) = \tilde\rho\(|z|^{1/(1-\mathsf a)}\)$ (with an obvious abuse of notation for the radial function $\tilde\rho$). We have
\begin{align*}
\int_{\R^2} \tau(z)\,dz & = 2\,\pi \int_0^\infty \tilde\rho\(r^{1/(1-\mathsf a)}\) r\,dr = 2\,\pi\,(1-\mathsf a) \int_0^\infty \tilde\rho(s)\,s^{1-2\,\mathsf a}\,ds \\
& = 2\,\pi\,(1-\mathsf a) \int_0^\infty \rho(s)\,s\,ds = (1-\mathsf a) \int_{\R^2}\rho(x)\,dx = 1-\mathsf a \,.
\end{align*}
Moreover, by a similar computation,
\begin{align*}
\int_{\R^2} \rho\,\log\rho\,dx + 2\,\mathsf a \int_{\R^2}\log|x|\,\rho\,dx & = \int_{\R^2} \tilde\rho\,\log\tilde\rho\,|x|^{-2\,\mathsf a}\,dx = 2\,\pi \int_0^\infty \tilde\rho(s)\,\log\tilde\rho(s)\,s^{1-2\,\mathsf a}\,ds \\
& = \frac{2\,\pi}{1-\mathsf a} \int_0^\infty \tau(r)\,\log\tau(r)\,r\,dr = \frac{1}{1-\mathsf a} \int_{\R^2} \tau(z)\,\log\tau(z)\,dz \,.		
\end{align*}
Finally, by Newton's theorem,
\begin{align*}
\iint_{\R^2\times\R^2} \rho(x)\,&\log\frac{1}{|x-y|}\,\rho(y)\,dx\,dy\\
& = \iint_{\R^2\times\R^2} \rho(x) \min\left\{\log\frac{1}{|x|},\log\frac{1}{|y|} \right\} \rho(y)\,dx\,dy \\
& = \iint_{\R^2\times\R^2} \tilde\rho(x) \min\left\{\log\frac{1}{|x|},\log\frac{1}{|y|} \right\} \tilde\rho(y)\,\frac{dx}{|x|^{2\,\mathsf a}}\,\frac{dy}{|y|^{2\,\mathsf a}} \\
& = (2\,\pi)^2 \int_0^\infty \int_0^\infty \tilde\rho(s) \min\left\{\log\frac{1}{s},\log\frac{1}{s'} \right\} \tilde\rho(s')\,\frac{ds}{s^{2\,\mathsf a-1}}\,\frac{ds'}{(s')^{2\,\mathsf a-1}} \\
& = \frac{(2\,\pi)^2}{(1-\mathsf a)^3} \int_0^\infty \int_0^\infty \tau(r) \min\left\{\log\frac{1}{r},\log\frac{1}{r'} \right\} \tau(r')\,r\,dr\,r'dr'	\\
& = \frac{1}{(1-\mathsf a)^3} \iint_{\R^2\times\R^2} \tau(z)\,\log\frac{1}{|z-w|}\,\tau(w)\,dz\,dw \,.
\end{align*}
To summarize, we have
\[
\mathcal G_{\mathsf a}[\rho] = \frac{1}{1-\mathsf a} \int_{\R^2} \tau(z)\,\log\tau(z)\,dz - \frac{2}{(1-\mathsf a)^2} \iint_{\R^2\times\R^2} \tau(z)\,\log\frac{1}{|z-w|}\,\tau(w)\,dz\,dw \,.
\]
By the logarithmic Hardy-Littlewood-Sobolev inequality~\eqref{logHLS}, taking the normalization of $\tau$ into account, we deduce that
\[
\mathcal G_{\mathsf a}[\rho] \ge-\log\(\frac{e\,\pi}{1-\mathsf a}\)
\]
with equality if and only if, for some $\lambda>0$,
\[
\tau(z) = \frac{1-\mathsf a}{\pi}\, \frac{\lambda^2}{(\lambda^2+|z|^2)^2} \,.
\]
Translating this in terms of $\rho$, we obtain the claim of the lemma.	
\end{proof}

\subsection{Additional remarks on the free energy and some open questions}\label{Sec:FEopen}

In Lemma~\ref{Lem:LogHLSimproved}, Inequality~\eqref{Ineq:LogHLSimproved} holds for some finite constant $\mathcal C(\mathsf a,\mathsf b)$ if $(\mathsf a,\mathsf b)=(0,-2)$ . We also know from Lemma~\ref{Lem:LogHLSlim1} that $\lim_{\mathsf a\to1_+}\mathcal C(\mathsf a,0)=-\infty$. If $\mathsf b=\mathsf a-1>0$, it is so far open to decide whether~\eqref{Ineq:LogHLSimproved} holds for some $\mathcal C(\mathsf a,\mathsf b)>-\infty$. See Fig.~\ref{Fig0}.

The free energy $\mathcal F_{\mathsf a,\mathsf b}[\rho]$ is a natural Lyapunov functional for the drift-diffusion equation
\be{DDE}
\frac{\partial\rho}{\partial t}=\Delta\rho+\nabla\cdot\Big(\rho\,\big(\mathsf a\,\nabla V+4\,\pi\,\tfrac{\mathsf b}M\,\nabla W\big)\Big)\,,\quad W=(-\Delta)^{-1}\rho\,.
\ee
Indeed we can write that $\Delta\rho=\nabla\cdot\(\rho\,\nabla\log\rho\)$ so that, for any smooth and sufficiently decreasing function $\rho$ solving~\eqref{DDE}, we obtain using an integration by parts that
\[
\frac d{dt}\mathcal F_{\mathsf a,\mathsf b}[\rho(t,\cdot)]=-\irtwo{\rho\,\left|\nabla\log\rho+\mathsf a\,\nabla V+4\,\pi\,\tfrac{\mathsf b}M\,\nabla W\right|^2}\,.
\]
Concerning the the long time behavior of the solution of~\eqref{DDE}, we expect that $\mathcal F_{\mathsf a,\mathsf b}[\rho(t,\cdot)]$ converges to $\mathcal C(\mathsf a,\mathsf b)$ as $t\to+\infty$ by analogy, \emph{e.g.}, with the Keller-Segel system (see~\cite[Section~4]{MR2226917}), but it is an open question to deduce global decay rates of $\mathcal F_{\mathsf a,\mathsf b}[\rho(t,\cdot)]$, for instance in a restricted class of solutions of~\eqref{DDE}, or even asymptotic decay rates as in~\cite{MR3196188}. Another issue is to understand the counterpart on $\mathbb S^2$ of the results on $\R^2$ using the inverse stereographic projection, as in~\cite{MR1143664,MR2915466,MR3227280}.

For any $M>0$, the boundedness from below of
\[
\mathcal F_{\mathsf a,\mathsf b}^{\,\mathsf c}[\rho]:=\mathsf a\irtwo{\!\log\(1+|x|^2\)\rho}-\frac{\mathsf b}M\iint_{\R^2\times\R^2}\!\rho(x)\,\rho(y)\,\log|x-y|\,dx\,dy+\mathsf c\irtwo{\!\rho\,\log\(\frac\rho M\)}
\]
on the set $\mathcal X_M$ arises for any $\mathsf c>0$ as a straightforward consequence of Lemma~\ref{Lem:LogHLSimproved} under the obvious condition $-\,2\,\mathsf c\le\mathsf b<\min\{\mathsf a-\mathsf c,2\,\mathsf a-2\,\mathsf c\}$, by homogeneity. The case $\mathsf c=0$ is covered by Lemma~\ref{Lem:LogHLSlim}. It is therefore a natural question to inquire what happens if $\mathsf c<0$.
\begin{proposition}\label{Prop:NegUnbounded} For any $(\mathsf a,\mathsf b)\in\R^2$ and $M>0$, with the above notations, if $\mathsf c<0$, then
\[
\inf_{\rho\,\in\mathcal X_M}\mathcal F_{\mathsf a,\mathsf b}^{\,\mathsf c}[\rho]=-\infty\,.
\]
\end{proposition}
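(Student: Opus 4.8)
\section*{Proof proposal for Proposition \ref{Prop:NegUnbounded}}

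The plan is to reuse the two-scale ``mass-splitting'' test function from the proof of Lemma~\ref{Prop:Unbounded}, now exploiting that the entropy term $\irtwo{\rho\,\log(\rho/M)}$ carries the \emph{negative} coefficient $\mathsf c<0$. By the $1$-homogeneity of $\mathcal F_{\mathsf a,\mathsf b}^{\,\mathsf c}$ (writing $\rho=M\,\sigma$ with $\sigma\in\mathcal X_1$), there is no loss of generality in assuming $M=1$. Fix once and for all a bounded nonnegative $\rho_0\in\mathrm L^1(\R^2)$ with $\nrm{\rho_0}1=1$ and supported in the annulus $A:=\{x\in\R^2:1\le|x|\le2\}$ --- for instance $\rho_0=\tfrac1{3\,\pi}\,\mathbbm 1_A$ --- so that $\irtwo{\rho_0\,\log\rho_0}$, $\irtwo{\log(1+|x|^2)\,\rho_0}$ and $\irtwo{\log|x|\,\rho_0}$ are all finite; note that for $\lambda>2$ the rescaled bump $x\mapsto\lambda^2\,\rho_0(\lambda\,x)$ is supported in $\{1/\lambda\le|x|\le2/\lambda\}$, which is disjoint from $A$.

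For $\varepsilon\in(0,1)$ and $\lambda>2$ set $\rho_{\varepsilon,\lambda}(x):=(1-\varepsilon)\,\rho_0(x)+\varepsilon\,\lambda^2\,\rho_0(\lambda\,x)\in\mathcal X_1$. Along this family all quantities entering $\mathcal F_{\mathsf a,\mathsf b}^{\,\mathsf c}$ are finite (bounded, compactly supported densities, and $\log|\cdot|\in\mathrm L^1_{\mathrm{loc}}(\R^2)$). The computation of the three contributions as $\lambda\to+\infty$, for fixed $\varepsilon$, proceeds exactly as in the proof of Lemma~\ref{Prop:Unbounded}: disjointness of the two supports gives $\irtwo{\rho_{\varepsilon,\lambda}\,\log\rho_{\varepsilon,\lambda}}=2\,\varepsilon\,\log\lambda+O_\varepsilon(1)$; a change of variables in the bump term shows $\varepsilon\irtwo{\log(1+\lambda^{-2}|x|^2)\,\rho_0}\to0$, so $\irtwo{\log(1+|x|^2)\,\rho_{\varepsilon,\lambda}}=O_\varepsilon(1)$; and expanding the double integral into a $\rho_0$-self-term, a cross-term and a bump-self-term, the only $\log\lambda$-contribution comes from the bump-self-term (equal to $-\log\lambda+O(1)$ after rescaling), while the cross-term converges, by dominated convergence, to $2\,\varepsilon\,(1-\varepsilon)\irtwo{\log|x|\,\rho_0}$ because $|x-y/\lambda|$ stays in a fixed compact subset of $(0,+\infty)$ on the relevant supports. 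Hence $\iint_{\R^2\times\R^2}\rho_{\varepsilon,\lambda}(x)\,\rho_{\varepsilon,\lambda}(y)\,\log|x-y|\,dx\,dy=-\,\varepsilon^2\,\log\lambda+O_\varepsilon(1)$, where $O_\varepsilon(1)$ denotes a quantity bounded in $\lambda$ for each fixed $\varepsilon$. Collecting terms (with $M=1$ the convolution weight is $-\mathsf b$), one obtains
\[
\mathcal F_{\mathsf a,\mathsf b}^{\,\mathsf c}[\rho_{\varepsilon,\lambda}]=\varepsilon\,\big(\mathsf b\,\varepsilon+2\,\mathsf c\big)\,\log\lambda+O_\varepsilon(1)\qquad\text{as }\lambda\to+\infty\,.
\]

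To conclude, since $\mathsf c<0$ we may fix $\varepsilon\in(0,1)$ small enough that $\mathsf b\,\varepsilon+2\,\mathsf c<0$ (any $\varepsilon\in(0,1)$ works when $\mathsf b\le0$); then the coefficient of $\log\lambda$ above is strictly negative, so $\mathcal F_{\mathsf a,\mathsf b}^{\,\mathsf c}[\rho_{\varepsilon,\lambda}]\to-\infty$ as $\lambda\to+\infty$, which gives $\inf_{\mathcal X_1}\mathcal F_{\mathsf a,\mathsf b}^{\,\mathsf c}=-\infty$ and, by homogeneity, the same for every $M>0$. The point deserving care --- and the analogue of the main obstacle --- is that the naive one-scale choice $\rho_\lambda(x)=\lambda^2\rho_0(\lambda x)$ yields instead the coefficient $(\mathsf b+2\,\mathsf c)\log\lambda$, whose sign is not controlled when $\mathsf b$ is large and positive; introducing the auxiliary mass parameter $\varepsilon$ replaces $\mathsf b+2\,\mathsf c$ by $\varepsilon\,(\mathsf b\,\varepsilon+2\,\mathsf c)$, whose sign is governed by $2\,\mathsf c<0$ once $\varepsilon$ is small, uniformly in $\mathsf a$ and $\mathsf b$. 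The remaining bookkeeping --- that the error terms in the potential and cross terms are $O(1)$, hence $o(\log\lambda)$ --- is immediate from the compact support of $\rho_0$ away from the origin.
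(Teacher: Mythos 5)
Your proof is correct, but it follows a genuinely different route from the paper's. You adapt the two-scale mass-splitting family $\rho_{\varepsilon,\lambda}=(1-\varepsilon)\,\rho_0+\varepsilon\,\lambda^2\rho_0(\lambda\,\cdot)$ from the proof of Lemma~\ref{Prop:Unbounded}, but run it in the \emph{concentration} direction $\lambda\to+\infty$ so that the potential term stays $O_\varepsilon(1)$ and the parameter $\mathsf a$ drops out of the leading coefficient; the expansion $\mathcal F_{\mathsf a,\mathsf b}^{\,\mathsf c}[\rho_{\varepsilon,\lambda}]=\varepsilon\,(\mathsf b\,\varepsilon+2\,\mathsf c)\log\lambda+O_\varepsilon(1)$ is right, and the small-$\varepsilon$ choice correctly beats any sign of $\mathsf b$ because the interaction contributes at order $\varepsilon^2$ while the entropy contributes at order $\varepsilon$. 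The paper instead spreads the mass over $n^2$ bumps on a lattice, each rescaled to size $\varepsilon=n^{-A}$: there the convolution term is damped by $1/n^2$, the potential term grows only like $\log n$, and the entropy term carries an arbitrarily large prefactor $A$, so that $\mathsf c<0$ forces divergence. Your construction is more economical and reuses machinery already set up for Lemma~\ref{Prop:Unbounded}; the paper's construction makes more visible the structural reason for the result, namely the concavity of $\rho\mapsto\mathsf c\irtwo{\rho\log\rho}$ for $\mathsf c<0$ under splitting the mass into many pieces. Both are complete proofs; the only (minor) presentational point in yours is to state explicitly, as you essentially do, that the small bump's support $\{1/\lambda\le|x|\le2/\lambda\}$ keeps $|x-y/\lambda|$ in a compact subset of $(0,\infty)$ so that the cross term is genuinely $O_\varepsilon(1)$ and not merely $o(\log\lambda)$.
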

\begin{proof} The key point of the proof is that $\rho\mapsto\mathsf c\irtwo{\rho\,\log\rho}$ with $\mathsf c<0$ is a concave functional. Let $\rho\in\mathcal X_1$ be a function supported in the unit ball. For any $\varepsilon\in(0,1/4)$ and $n\in\mathbb N\setminus\{0\}$, let
\[
R_{\varepsilon,n}(x):=\frac1{n^2}\sum_{k,\ell=1}^n\varepsilon^{-2}\,\rho\(\varepsilon^{-1}\,\big(x-(k,\ell)\big)\)\quad\forall\,x\in\R^2\,.
\]
In order to investigate the limits $\varepsilon\to0_+$ and $n\to+\infty$, we compute
\begin{align*}
&\irtwo{R_{\varepsilon,n}\,\log R_{\varepsilon,n}}=-\,\log\(n^2\,\varepsilon^2\)+\irtwo{\rho\,\log\rho}=-\,2\,\log\(n\,\varepsilon\)+O(1)\,,\\
&\irtwo{\log\(1+|x|^2\)R_{\varepsilon,n}}\lesssim\log\(1+2\,n^2\)=2\,\log n\,\big(1+o(1)\big)\,,\\
&\left|\iint_{\R^2\times\R^2}R_{\varepsilon,n}(x)\,R_{\varepsilon,n}(y)\,\log|x-y|\,dx\,dy\right|\lesssim\frac{|\log\varepsilon|}{n^2}+\frac{\log\(1+2\,n^2\)}{2\,n^2}\\
&\hspace*{10cm}=\frac{|\log(\varepsilon/n)|}{n^2}\,\big(1+o(1)\big)\,.
\end{align*}
With the choice $\varepsilon=n^{-A}$ for some $A>0$ large enough, we find that $\mathsf c\irtwo{R_{\varepsilon,n}\,\log R_{\varepsilon,n}}\sim(A-1)\,|\mathsf c|\,\log n\to-\infty$ as $n\to+\infty$ and this term dominates the other ones. This concludes the proof.
\end{proof}

\section{Logarithmic interpolation inequalities and Schr\"odinger energy estimates}\label{Sec:Schrodinger}

We are now going to study the Schr\"odinger energy $\mathcal E$ defined by~\eqref{SchEnergy}. As we shall see, the kinetic energy $\irtwo{|\nabla u|^2}$ completely changes the picture and considering $\mathsf c<0$ makes sense.

\subsection{A new logarithmic interpolation inequality}

Here we combine logarithmic Hardy-Littlewood-Sobolev inequalities with the logarithmic Sobolev inequality to produce a new logarithmic interpolation inequality. This new inequality is more directly connected with the Schr\"odinger-Poisson system~(SP).

In dimension $d=2$, with the Gaussian measure defined as $d\mu=\mu(x)\,dx$ where $\mu(x)=(2\,\pi)^{-1}\,\exp(-|x|^2/2)$, the \emph{Gaussian logarithmic Sobolev inequality} reads
\be{Ineq:LogSobGaussian}
\irtwomu{|\nabla v|^2}\ge\frac12\irtwomu{|v|^2\,\log|v|^2}
\ee
for any function $v\in\mathrm H^1(\R^2,d\mu)$ such that $\irtwomu{|v|^2}=1$, and there is equality if and only if $v\equiv1$ (see~\cite[Theorem~4]{MR1132315}). With $u=v\,\sqrt\mu$, it is a classical fact that Inequality~\eqref{Ineq:LogSobGaussian} is equivalent to the standard \emph{Euclidean logarithmic Sobolev inequality} established in~\cite{Gross75} (also see~\cite{Federbush} for an earlier related result) which can be written in dimension $d=2$ as
\be{Ineq:LogSobGaussianEuclidean}
\irtwo{|\nabla u|^2}\ge\frac12\irtwo{|u|^2\,\log\(\frac{|u|^2}{\nrm u2^2}\)}+\frac12\,\log\(2\,\pi\,e^2\)\nrm u2^2
\ee
for any function $u\in\mathrm H^1(\R^2,dx)$. This inequality is not invariant under scaling. By applying~\eqref{Ineq:LogSobGaussianEuclidean} to the scaled function $u_\lambda(x)=\lambda\,u(\lambda\,x)$, we obtain
\be{Ineq:LogSobGaussianEuclideanParam}
\lambda^2\,\nrm{\nabla u}2^2-\log\lambda\,\nrm u2^2\ge\frac12\irtwo{|u|^2\,\log\(\frac{|u|^2}{\nrm u2^2}\)}+\frac12\,\log\(2\,\pi\,e^2\)\nrm u2^2
\ee
for any $\lambda>0$. The scaling parameter $\lambda$ can be optimized in order to obtain the \emph{Euclidean logarithmic Sobolev inequality in scale invariant form}
\be{Ineq:LogSobEuclideanWeissler}
\nrm u2^2\,\log\(\frac1{\pi\,e}\,\frac{\nrm{\nabla u}2^2}{\nrm u2^2}\)\ge\irtwo{|u|^2\,\log\(\frac{|u|^2}{\nrm u2^2}\)}
\ee
for any function $u\in\mathrm H^1(\R^2,dx)$, that can be found in~\cite[Theorem~2]{MR479373},~\cite[Inequality~(2.3)]{MR0109101},~\cite[Appendix~B]{MR823597} or~\cite[Inequality~(26)]{MR1132315}. See~\cite{MR1768665,MR3493423} for further references and consequences. Of course,~\eqref{Ineq:LogSobGaussianEuclidean} can be deduced from~\eqref{Ineq:LogSobEuclideanWeissler}, so that~\eqref{Ineq:LogSobGaussian},~\eqref{Ineq:LogSobGaussianEuclidean} and~\eqref{Ineq:LogSobEuclideanWeissler} are equivalent, and none of these inequalities is limited to $d=2$, but constants in~\eqref{Ineq:LogSobGaussianEuclidean} and~\eqref{Ineq:LogSobEuclideanWeissler} have to be adapted to the dimension if $d\neq2$.

It is possible to combine~\eqref{logHLS} and~\eqref{Ineq:LogSobGaussianEuclidean} with $\rho=|u|^2$ into
\be{Ineq}
\irtwo{|\nabla u|^2}\ge\frac{2\,\pi}{\nrm u2^2}\irtwo{|u|^2\,(-\Delta)^{-1}|u|^2}+\frac12\,\log(2\,e)\,\nrm u2^2
\ee
where
\[
2\,\pi\irtwo{|u|^2\,(-\Delta)^{-1}|u|^2}=-\iint_{\R^2\times\R^2}|u(x)|^2\,\log|x-y|\,|u(y)|^2\,dy\,.
\]
By applying~\eqref{Ineq} to the scaled function $u_\lambda(x)=\lambda\,u(\lambda\,x)$, we obtain that
\be{Ineq1Param}
\lambda^2\irtwo{|\nabla u|^2}-\nrm u2^2\,\log\lambda\ge\frac{2\,\pi}{\nrm u2^2}\irtwo{|u|^2\,(-\Delta)^{-1}|u|^2}+\frac12\,\log(2\,e)\,\nrm u2^2
\ee
for any $\lambda>0$. By optimizing on $\lambda$, we obtain the following scale invariant inequality.
\begin{proposition}\label{Prop:Ineq} For any function $u\in\mathrm H^1(\R^2)$, we have
\be{ScaledIneq}
2\,\pi\irtwo{|u|^2\,(-\Delta)^{-1}|u|^2}\le\nrm u2^4\,\log\(\frac{\nrm{\nabla u}2}{\nrm u2}\)\,.
\ee
\end{proposition}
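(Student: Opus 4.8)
The plan is to optimize over the scaling parameter $\lambda$ in~\eqref{Ineq1Param}, exactly as the scale-invariant inequality~\eqref{Ineq:LogSobEuclideanWeissler} was extracted from~\eqref{Ineq:LogSobGaussianEuclideanParam}. First I would dispose of the degenerate cases. If $\nrm u2=0$ then $u\equiv0$ and both sides of~\eqref{ScaledIneq} vanish with the usual convention, while $\nrm{\nabla u}2=0$ forces $u$ to be constant, hence $u\equiv0$ again since $u\in\mathrm L^2(\R^2)$; thus we may assume $\nrm u2>0$ and $\nrm{\nabla u}2>0$, so that the logarithm in~\eqref{ScaledIneq} is meaningful. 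Moreover $2\,\pi\irtwo{|u|^2\,(-\Delta)^{-1}|u|^2}=-\iint_{\R^2\times\R^2}|u(x)|^2\log|x-y|\,|u(y)|^2\,dx\,dy$ is bounded above, since the negative part of $\log|x-y|$ is integrable against $|u(x)|^2\,|u(y)|^2$; so if this quantity equals $-\infty$ there is nothing to prove, and otherwise it is finite and~\eqref{Ineq1Param} is available for every $\lambda>0$.

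Next, with $u$ fixed, write $A:=\nrm{\nabla u}2^2>0$ and $B:=\nrm u2^2>0$, and consider $g(\lambda):=\lambda^2\,A-B\,\log\lambda$ on $(0,+\infty)$, which is precisely the left-hand side of~\eqref{Ineq1Param}. The function $g$ is strictly convex and tends to $+\infty$ both as $\lambda\to0_+$ and as $\lambda\to+\infty$, so it attains its minimum at the unique critical point $\lambda_\star$ determined by $2\,\lambda_\star^2\,A=B$, that is $\lambda_\star^2=B/(2\,A)$; a one-line computation then gives $g(\lambda_\star)=\tfrac12\,B\(1+\log\tfrac{2\,A}B\)$. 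Choosing $\lambda=\lambda_\star$ in~\eqref{Ineq1Param} yields
\[
\tfrac12\,\nrm u2^2\(1+\log\tfrac{2\,\nrm{\nabla u}2^2}{\nrm u2^2}\)\ge\frac{2\,\pi}{\nrm u2^2}\irtwo{|u|^2\,(-\Delta)^{-1}|u|^2}+\tfrac12\,\log(2\,e)\,\nrm u2^2\,.
\]

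Finally I would multiply through by $\nrm u2^2$ and rearrange. On the right-hand side the term $\tfrac12\,\log(2\,e)\,\nrm u2^4$ cancels exactly against the contribution $\tfrac12\,(1+\log2)\,\nrm u2^4$ coming from the left, leaving $2\,\pi\irtwo{|u|^2\,(-\Delta)^{-1}|u|^2}\le\tfrac12\,\nrm u2^4\,\log\big(\nrm{\nabla u}2^2/\nrm u2^2\big)=\nrm u2^4\,\log\big(\nrm{\nabla u}2/\nrm u2\big)$, which is~\eqref{ScaledIneq}. There is no genuine obstacle here: the only point requiring care is the bookkeeping of the constants, so that the additive $\log$-terms cancel and the estimate collapses to its scale-invariant form. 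This cancellation is not a coincidence — it reflects that~\eqref{Ineq} already carries the sharp constant $\tfrac12\,\log(2\,e)$ inherited from~\eqref{Ineq:LogSobGaussianEuclidean}, which is exactly the value reproduced by the optimization over $\lambda$.
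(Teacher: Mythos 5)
Your proof is correct and is exactly the argument the paper intends: the paper simply says ``by optimizing on $\lambda$'' in~\eqref{Ineq1Param}, and your computation of the minimizer $\lambda_\star^2=\nrm u2^2/(2\,\nrm{\nabla u}2^2)$ and the resulting cancellation of the constant $\tfrac12\,\log(2\,e)=\tfrac12\,(1+\log 2)$ carries that out accurately. The handling of the degenerate cases and of a possibly infinite left-hand side is a welcome extra level of care.
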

Since~\eqref{logHLS} and~\eqref{Ineq:LogSobGaussianEuclidean} admit incompatible optimal functions, respectively the function $\rho=\rho_\star$ given by~\eqref{rhostar} and the Gaussian function $u(x)=(2\,\pi)^{-1/2}\,\sqrt M\,e^{-|x|^2/4}=\sqrt{M\,\mu(x)}$, up to multiplications by a constant, scalings and translations, equality is not achieved in~\eqref{ScaledIneq} by a function $u\in\mathrm H^1(\R^2)$.

\subsection{Interpolations inequalities in higher dimensions}\label{Sec:IneqHigher}

For comparison, let us briefly consider the case of higher dimensions, that is, the case of the Euclidean space~$\R^d$ with $d\ge3$. We can refer for instance to~\cite{MR2826402} for more detailed considerations on scalings in absence of an external potential. The Gagliardo-Niren\-berg inequality
\be{GN}
\mathcal C_{\rm GN}\,\nrm{\nabla u}2^\vartheta\,\nrm u2^{1-\vartheta}\ge\nrm up\quad\forall\,u\in\mathrm H^1(\R^d)
\ee
holds with $\theta=d\,\frac{p-2}{2\,p}$ for any $p\in(2,2^*]$, where $2^*=\frac{2\,d}{d-2}$ is the critical Sobolev exponent. Optimality is attained by the so-called \emph{Lommel functions}, which are radial functions according to, \emph{e.g.},~\cite{MR691044}, and are defined by the Euler-Lagrange but have no explicit formulation in terms of the usual special functions: see~\cite{Von_Lommel_1875,Lommel_1880}. This can be combined with the \emph{critical Hardy-Littlewood-Sobolev inequality},
\be{HLS}
\frac1{(d-2)\,|\mathbb S^{d-1}|}\iint_{\R^d\times\R^d}\frac{\rho(x)\,\rho(y)}{|x-y|^{d-2}}\,dx\,dy=\ird{\rho\,(-\Delta)^{-1}\rho}\le\mathcal C_{\rm{HLS}}\(\ird{|\rho|^\frac{2\,d}{d+2}}\)^{1+\frac2d}
\ee
for any function $\rho\in\mathrm L^\frac{2\,d}{d+2}(\R^d)$, to establish for $\rho=|u|^2$ that
\be{GN-HLS}
\mathcal C_d\ird{|u|^2\,(-\Delta)^{-1}|u|^2}\le\nrm{\nabla u}2^{d-2}\,\nrm u2^{6-d}\quad\forall\,u\in\mathrm H^1(\R^d)\,,
\ee
under the condition that $\frac{4\,d}{d+2}\le\frac{2\,d}{d-2}$, that is, for
\[
3\le d\le6\,.
\]
Let us notice that the inequality is \emph{critical} if $d=6$ in the sense that $\int_{\R^6}|u|^2\,(-\Delta)^{-1}|u|^2\,dx$ and $\(\int_{\R^6}|\nabla u|^2\,dx\)^2$ have the same homogeneity and scaling invariance, which is a standard source of loss of compactness along an arbitrary minimizing sequence satisfying a given $\nrm u2$ constraint. From~\eqref{GN} and~\eqref{HLS}, we find out that
\[
\mathcal C_d\ge\mathcal C_{\rm{GN}}^{-4}\,\mathcal C_{\rm{HLS}}^{-1}\,.
\]
The above estimate is strict because optimal functions do not coincide in~\eqref{GN} and~\eqref{HLS} if $3\le d\le5$. In dimension $d=6$, we have that $\mathcal C_6=\mathcal C_{\rm{GN}}^{-4}\,\mathcal C_{\rm{HLS}}^{-1}$ is sharp, with equality in~\eqref{GN-HLS} achieved by the Aubin-Talenti function $x\mapsto(1+|x|^2)^{-2}$.

\subsection{Bounds on the Schr\"odinger energy}\label{Sec:SchrodingerBounds}

Let $\gamma_+:=\max\{\gamma,0\}$ and consider $\mathcal E$ as in~\eqref{SchEnergy}.
\begin{theorem}\label{Thm:E} Let $\alpha$, $\beta$, $\gamma$ be real parameters and assume that $M>0$. Then
\begin{enumerate}
\item[(i)] $\mathcal E$ is not bounded from below on $\mathcal H_M$ if one of the following conditions is satisfied:
\begin{align*}
\mathrm{(a)}\quad&\alpha<0\,,\\
\mathrm{(b)}\quad&\alpha\ge0\quad\mbox{and}\quad M\,\beta>\min\big\{2\,\alpha-\gamma,4\,\alpha-2\,\gamma\big\}\,.
\end{align*}
\item[(ii)] $\mathcal E$ is bounded from below on $\mathcal H_M$ if either $\alpha=0$, $\beta \leq 0$ and $M\,\beta+2\,\gamma\le0$, or $\alpha>0$ and one of the following conditions is satisfied:
\begin{align*}
\mathrm{(a)}\quad&\gamma\le0\quad\mbox{and}\quad M\,\beta \leq 2\,\alpha\,,\\
\mathrm{(b)}\quad&\gamma>0\,,\quad M\,\beta\le4\,\alpha-2\,\gamma
\quad\mbox{and}\quad M\,\beta<2\,\alpha-\gamma\,.
\end{align*}
\end{enumerate}
\end{theorem}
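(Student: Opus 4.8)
The plan is to reduce everything to the free-energy results of Theorem~\ref{Thm:FreeEnergy} together with the scale-invariant logarithmic Sobolev inequality~\eqref{Ineq:LogSobEuclideanWeissler}, after setting $\rho=|u|^2$ so that $\nrm\rho1=M$. The first observation is that the local log term $\gamma\irtwo{|u|^2\log|u|^2}$ is exactly the log term appearing in $\mathcal F_{\mathsf a,\mathsf b}^{\,\mathsf c}$, and the Poisson term $2\pi\beta\irtwo{W|u|^2}=-\,\beta\iint|u(x)|^2\log|x-y|\,|u(y)|^2$ is the convolution term; the external potential, by~\eqref{V}, is the weight $2\alpha\irtwo{\log(1+|x|^2)|u|^2}$. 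Thus, up to additive multiples of $M$ and the extra kinetic term $\irtwo{|\nabla u|^2}\ge0$, we have
\[
\mathcal E[u]=\irtwo{|\nabla u|^2}+\mathcal F_{2\alpha,\,M\beta}^{\;\gamma}[\rho]+\text{(constant)}\,.
\]
(Here I am matching conventions: $\mathsf a=2\alpha$, $\mathsf b=M\beta$, $\mathsf c=\gamma$ in the notation of Section~\ref{Sec:FEopen}, and one has to be careful that in $\mathcal F_{\mathsf a,\mathsf b}^{\,\mathsf c}$ the coefficient of the convolution is $-\mathsf b/M$ while here the Poisson term carries $+2\pi\beta$; the sign bookkeeping is routine but must be done explicitly.)

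\emph{Unboundedness, part (i).} For (a), $\alpha<0$: translate a fixed bump $u$ to infinity; the kinetic, Poisson and local-log terms are translation invariant, while $\alpha\irtwo{V|u_{x_0}|^2}\sim 4\alpha\log|x_0|\to-\infty$. For (b), one reuses the test-function families from the proof of Lemma~\ref{Prop:Unbounded}, but now one must check the kinetic term does not spoil the blow-up. Under the dilation $u_\lambda(x)=\lambda\,u(\lambda x)$ (so $\rho_\lambda=\lambda^2\rho(\lambda\cdot)$, mass preserved, $\nrm{\nabla u_\lambda}2^2=\lambda^2\nrm{\nabla u}2^2$), sending $\lambda\to0_+$ kills the kinetic term, and the argument giving $\mathcal F\sim(\mathsf b+2-2\mathsf a)\log\lambda$ with compactly supported $\rho$ away from the origin shows $\mathcal E[u_\lambda]\to-\infty$ when $M\beta>4\alpha-2\gamma$ — but one must carry the $\gamma$ term: in fact one needs the combined coefficient of $\log\lambda$, which is $(\mathsf b+2\mathsf c)-2\mathsf a=(M\beta+2\gamma)-4\alpha$, to be positive, plus the $\varepsilon,\lambda$ two-bump trick of Lemma~\ref{Prop:Unbounded} to get the sharper threshold $M\beta>2\alpha-\gamma$. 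The Proposition~\ref{Prop:NegUnbounded} tiling construction is \emph{not} needed here because $\gamma\irtwo{|u|^2\log|u|^2}$ with $\gamma<0$ is controlled: the point is precisely that the kinetic term dominates it, which is the content of~\eqref{Ineq:LogSobEuclideanWeissler}.

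\emph{Boundedness, part (ii).} This is where~\eqref{Ineq:LogSobEuclideanWeissler} enters. Rewrite it as
\[
\gamma\irtwo{|u|^2\log\!\Big(\tfrac{|u|^2}{\nrm u2^2}\Big)}\ge-\,|\gamma|\,\nrm u2^2\,\log\!\Big(\tfrac1{\pi e}\,\tfrac{\nrm{\nabla u}2^2}{\nrm u2^2}\Big)\quad\text{when }\gamma<0,
\]
so a $-\varepsilon$ fraction of the kinetic energy can absorb the focusing log term at the cost of an additive $M\log$ term and — crucially — one is left with $(1-\varepsilon)\irtwo{|\nabla u|^2}+\mathcal F_{2\alpha,M\beta}[\rho]$ plus harmless constants, and then one applies the free-energy bound. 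But there is a subtlety: dropping $(1-\varepsilon)\irtwo{|\nabla u|^2}\ge0$ and invoking Theorem~\ref{Thm:FreeEnergy} requires $(\mathsf a,\mathsf b)=(2\alpha,M\beta)$ to lie in the white region. When $\gamma<0$ and $M\beta\le2\alpha$ one is on or below the line $\mathsf b=\mathsf a$, which is \emph{not} inside the free-energy region near $\mathsf b>0$; so one cannot simply discard the kinetic term — instead one must use a genuine interpolation that trades gradient against the Poisson term. The right tool for $\gamma\le0$, $M\beta\le2\alpha$ is inequality~\eqref{Ineq} (or its scaled form~\eqref{Ineq1Param} / Proposition~\ref{Prop:Ineq}): combine $\varepsilon\irtwo{|\nabla u|^2}\ge \frac{2\pi\varepsilon}{\nrm u2^2}\irtwo{|u|^2(-\Delta)^{-1}|u|^2}+\text{const}\cdot M$ to neutralize the repulsive/attractive Poisson term whenever $M\beta\le 2\alpha$ gives enough room, use another $\varepsilon'$ of kinetic energy via~\eqref{Ineq:LogSobEuclideanWeissler} to beat the $\gamma<0$ log term, and bound the remaining $2\alpha\irtwo{\log(1+|x|^2)|u|^2}\ge0$ trivially (it is nonnegative since $\alpha>0$). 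For $\gamma>0$ (defocusing) the local log term is an \emph{ally}: now $\mathsf c=\gamma>0$ and one is exactly in the regime of the homogeneity remark of Section~\ref{Sec:FEopen}, namely $\mathcal F_{\mathsf a,\mathsf b}^{\,\mathsf c}$ bounded below iff $-2\mathsf c\le\mathsf b<\min\{\mathsf a-\mathsf c,2\mathsf a-2\mathsf c\}$, which with $\mathsf a=2\alpha$, $\mathsf b=M\beta$, $\mathsf c=\gamma$ reads $M\beta<2\alpha-\gamma$ and $M\beta\le4\alpha-2\gamma$ — precisely hypothesis (b) of (ii) — and one simply drops the nonnegative kinetic term. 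The boundary-case $\alpha=0$, $\beta\le0$, $M\beta+2\gamma\le0$ is handled by $\irtwo{|\nabla u|^2}\ge\frac{2\pi}{\nrm u2^2}\irtwo{|u|^2(-\Delta)^{-1}|u|^2}+\text{const}$ from~\eqref{Ineq} to kill the (now attractive, $\beta\le0$) Poisson term, then~\eqref{Ineq:LogSobEuclideanWeissler} rescaled to absorb whatever sign of $\gamma$ remains, using $M\beta+2\gamma\le0$ to close.

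The main obstacle, as indicated above, is the case $\gamma\le0$, $M\beta\le2\alpha$: here one cannot route through Theorem~\ref{Thm:FreeEnergy} by simply discarding the kinetic energy, because the relevant $(\mathsf a,\mathsf b)$ pair can sit outside the free-energy region; the kinetic term is genuinely needed, and the delicate point is to split it into (at least) three pieces — one to tame the attractive/repulsive Poisson term via~\eqref{Ineq}, one to absorb the focusing logarithm via~\eqref{Ineq:LogSobEuclideanWeissler}, and to verify that the leftover coefficients, together with the sign condition $M\beta\le2\alpha$, still leave a nonnegative budget. Tracking the explicit constants through these two scale-invariant inequalities, and checking that the borderline $M\beta=2\alpha$ (resp.\ $M\beta+2\gamma=0$ when $\alpha=0$) is still covered, is the technical heart of the argument.
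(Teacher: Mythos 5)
Part (i) of your proposal and the interior defocusing case of part (ii) (namely $\gamma>0$ with $-2\,\gamma\le M\beta<\min\{2\,\alpha-\gamma,4\,\alpha-2\,\gamma\}$, where one indeed just drops the kinetic term and invokes the free-energy bound) follow the paper's route. The genuine gap is in your treatment of case (ii)(a) when $\beta>0$. You propose to neutralize the Poisson term with a fraction of the kinetic energy via~\eqref{Ineq} and then to discard $2\,\alpha\irtwo{\log(1+|x|^2)\,|u|^2}\ge0$ as harmless. This cannot work. Writing $\rho=|u|^2$, inequality~\eqref{Ineq} states
\[
\varepsilon\irtwo{|\nabla u|^2}+\frac\varepsilon M\iint_{\R^2\times\R^2}\rho(x)\,\rho(y)\,\log|x-y|\,dx\,dy\;\ge\;\tfrac12\,\varepsilon\,\log(2\,e)\,M\,,
\]
i.e.\ it bounds the convolution integral from \emph{below}; this is useful only when that integral enters $\mathcal E$ with a nonnegative coefficient, which happens precisely for $\beta\le0$ (attractive case, where the danger is concentration). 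For $\beta>0$ the dangerous regime is spreading: for $u_\lambda=\lambda\,u(\lambda\,\cdot)$ with $\lambda\to0_+$ the convolution integral grows like $-M^2\log\lambda\to+\infty$ while the kinetic energy tends to $0$, so the term $-\beta\iint\rho\,\rho\,\log|x-y|$ tends to $-\infty$ at zero kinetic cost. The only mechanism that controls this is the confining potential, which you threw away, and the exchange rate is exactly Lemma~\ref{Lem:LogHLSlim}: multiplying~\eqref{Ineq:LogHLS-l} by $M\beta/2\ge0$ gives
\[
2\,\alpha\irtwo{\log\(1+|x|^2\)\rho}-\beta\iint_{\R^2\times\R^2}\rho(x)\,\rho(y)\,\log|x-y|\,dx\,dy\;\ge\;(2\,\alpha-M\beta)\irtwo{\log\(1+|x|^2\)\rho}+\tfrac12\,\beta\,M^2\,,
\]
which is bounded below precisely when $M\beta\le2\,\alpha$; the kinetic energy is then used only against the focusing term $\gamma\le0$ via~\eqref{Ine-1}. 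Your argument produces no role for $\alpha$ at all and therefore cannot yield the threshold $M\beta\le2\,\alpha$.

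Two further points in part (ii) are not closed as written. First, hypothesis (ii)(b) is \emph{not} equivalent to the condition $-2\,\mathsf c\le\mathsf b<\min\{\mathsf a-\mathsf c,2\,\mathsf a-2\,\mathsf c\}$ of the homogeneity remark: it also allows $M\beta+2\,\gamma<0$ and the boundary $M\beta=4\,\alpha-2\,\gamma$. When $M\beta+2\,\gamma<0$ the functional $\gamma\,\mathcal F^{\,\gamma}_{2\alpha,M\beta}$ is itself unbounded below (this is the regime $\mathsf b<-2\,\mathsf c$, cf.\ the $\lambda\to+\infty$ scaling in Lemma~\ref{Prop:Unbounded}), so the kinetic term cannot be dropped; the paper keeps it, splits the Poisson coupling as $M\beta=(M\beta+2\,\gamma)+(-2\,\gamma)$, absorbs the excess attractive part $\frac{2\pi}M(M\beta+2\,\gamma)\irtwo{|u|^2\,(-\Delta)^{-1}|u|^2}$ into the kinetic energy via~\eqref{Ine-2}, and bounds $\gamma\,\mathcal F_{\mathsf a,-2}$ by Lemma~\ref{Lem:LogHLSimproved}; the boundary case $M\beta=4\,\alpha-2\,\gamma$ requires Lemmas~\ref{lemma1} and~\ref{lemma2}, since the strict inequality in the free-energy region does not reach it. Second, for $\alpha=0$, $\beta\le0$, $\gamma>0$, the logarithmic Sobolev inequality~\eqref{Ineq:LogSobEuclideanWeissler} goes the wrong way (it gives an upper bound on $\irtwo{|u|^2\log|u|^2}$, useless for $\gamma>0$ where the danger is $\irtwo{\rho\log\rho}\to-\infty$ under spreading); the correct tool is~\eqref{logHLS}, which converts the defocusing term into an extra attractive Poisson coupling, the condition $M\beta+2\,\gamma\le0$ guaranteeing that the effective coupling remains attractive and hence controllable by~\eqref{Ine-2}.
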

Two cases covered by Theorem~\ref{Thm:E} are shown in Fig.~\ref{Fig1}.
\begin{figure}[ht]
\begin{picture}(14.25,5){\includegraphics[width=7cm]{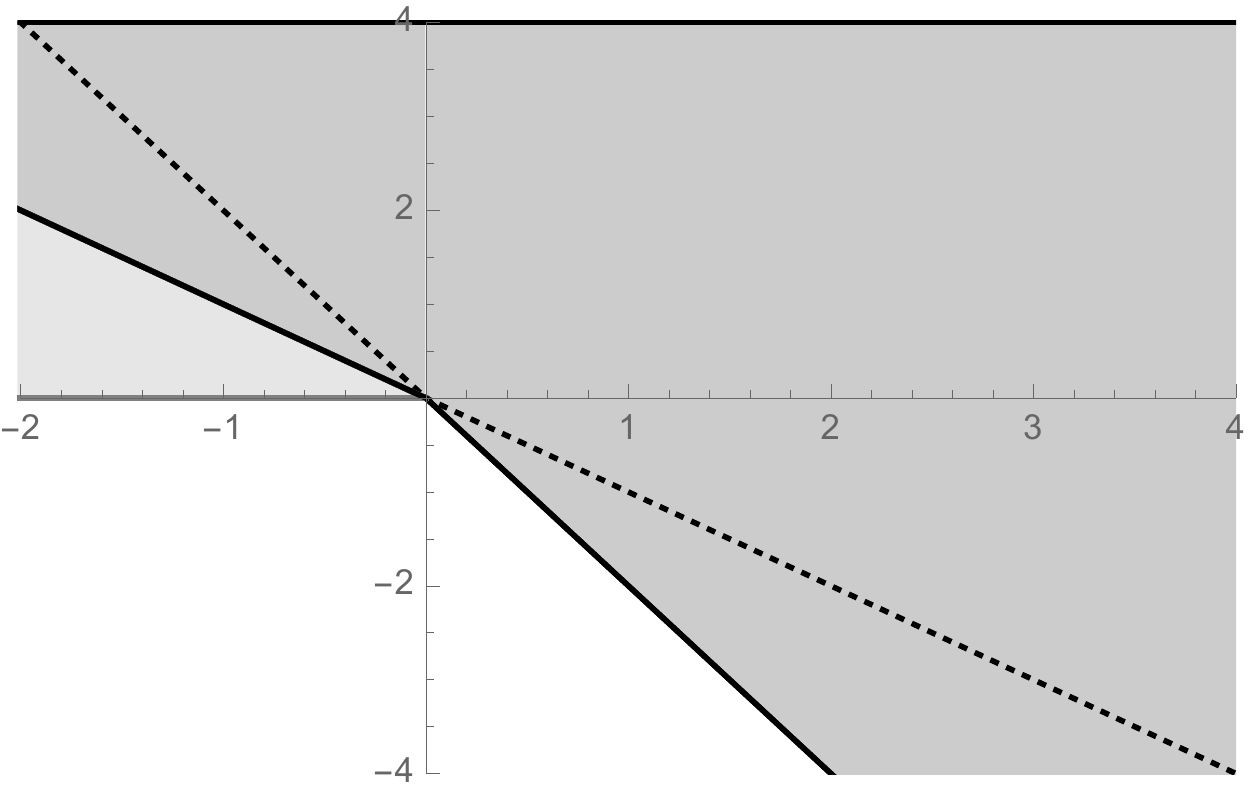}\hspace*{0.25cm}\includegraphics[width=7cm]{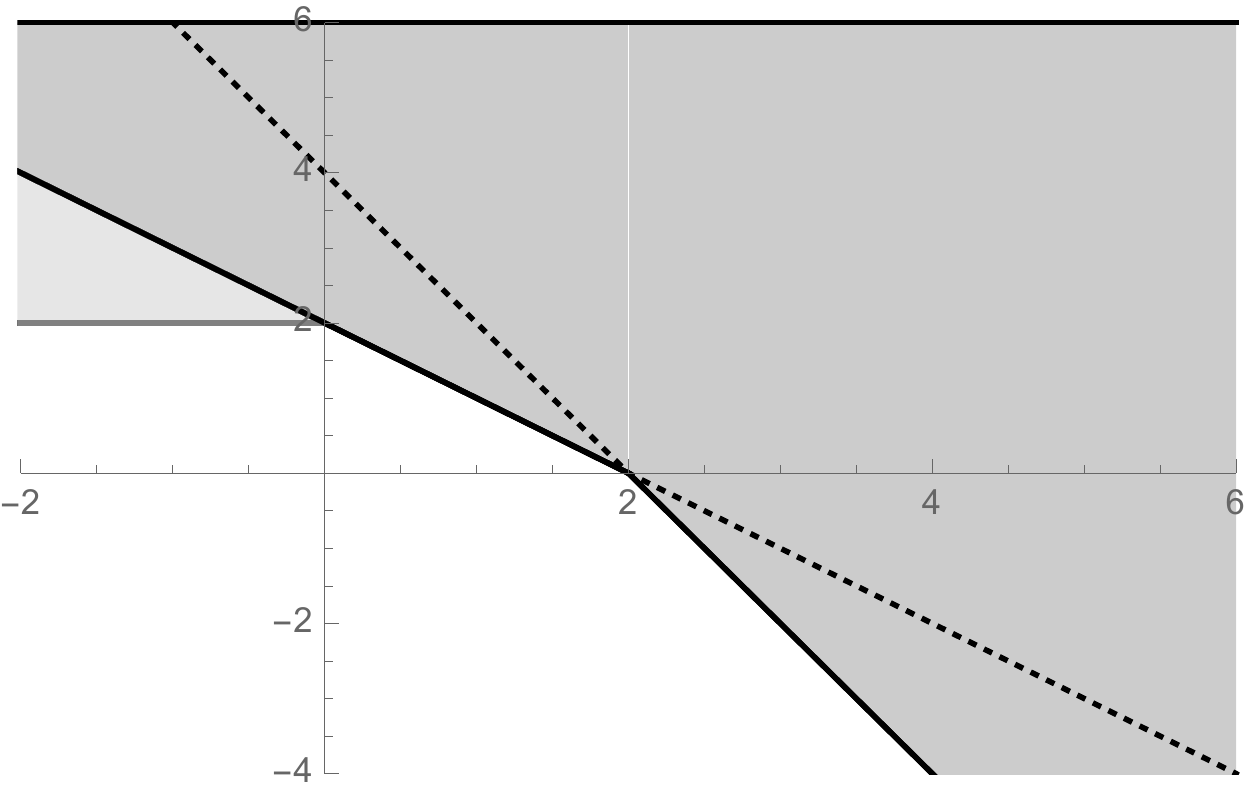}}
\put(-7.7,2.5){$\gamma$}
\put(-0.9,2){$\gamma/\alpha$}
\put(-13.8,2.5){$?$}
\put(-6.6,2.9){$?$}
\put(-8.8,3.7){$\alpha=0$}
\put(-1.5,3.7){$\alpha=1$}
\put(-11.7,4){$M\,\beta$}
\put(-5,4){$M\,\beta/\alpha$}
\end{picture}
\caption{\label{Fig1}\small\emph{White (resp.~dark grey) area corresponds to the domain in which~$\mathcal E$ is bounded (resp.~unbounded) from below with $\alpha=0$ on the left and $\alpha=1$ on the right. Whether $\mathcal E$ is bounded in the light grey domain or not is open so far.}}
\end{figure}
\begin{proof} Let us start by the proof of (i), \emph{i.e.}, the cases for which $\inf\{\mathcal E[u]\,:\,u\in\mathcal H_M\}=-\infty$. Case (a) corresponds to $\alpha<0$ and can be dealt with using translations as in the proof of Lemma~\ref{Prop:Unbounded}: $\lim_{|x_0|\to+\infty}\mathcal E[u(\cdot-x_0)]=-\infty$. Next let $u_\lambda(x):=\lambda\,u(\lambda\,x)$ and notice that
\[
\irtwo{|\nabla u_\lambda|^2}=\lambda^2\,\irtwo{|\nabla u|^2}=o(\log\lambda)\quad\mbox{as}\quad\lambda\to0_+\,,
\]
so that, with $\rho_\lambda=|u_\lambda|^2$,
\[
\mathcal E[u_\lambda]\sim2\,\alpha\irtwo{\log\(1+|x|^2\)\rho_\lambda}+2\,\pi\,\beta\irtwo{\rho_\lambda\,(-\Delta)^{-1}\rho_\lambda}+\gamma\irtwo{\rho_\lambda\,\log\rho_\lambda}\,.
\]
By arguing as in Lemma~\ref{Prop:Unbounded}, we obtain that $\lim_{\lambda\to0_+}\mathcal E[u_\lambda]=-\infty$ in case (b).

Concerning (ii), the boundedness from below of $\mathcal E$ is as follows. From~\eqref{Ineq:LogSobGaussianEuclideanParam} and~\eqref{Ineq1Param}, we learn that
\begin{equation}\label{Ine-1}
\irtwo{|\nabla u|^2}\ge\frac1{2\,\lambda_1^2}\irtwo{|u|^2\,\log\(\frac{|u|^2}M\)}+\frac{\log\(2\,\pi\,e^2\,\lambda_1^2\)}{2\,\lambda_1^2}\,M
\end{equation}
and
\begin{equation}\label{Ine-2}
\irtwo{|\nabla u|^2}\ge\frac{2\,\pi}{M\,\lambda_2^2}\irtwo{|u|^2\,(-\Delta)^{-1}|u|^2}+\frac{\log\(2\,e\,\lambda_2^2\)}{2\,\lambda_2^2}\,M
\end{equation}
with $\nrm u2^2=M$. Here $\lambda_1$ and $\lambda_2$ are two arbitrary positive parameters. Let us distinguish various cases:
\begin{enumerate}
\item If $\alpha=0$, $\beta\le0$ and $\gamma\le0$, the boundedness from below of $\mathcal E$ is a direct consequence of~\eqref{Ine-1} and~\eqref{Ine-2}. The case $\alpha=0$, $\beta<0$ and $\gamma>0$ can be reduced to the case $\alpha=0$ and $\gamma=0$ using~\eqref{logHLS} if $M\,\beta+2\,\gamma\le0$. 
\item If either $\alpha>0$, $\beta\le0$ and $\gamma\le0$, or $\alpha>0$, $\beta>0$,  $\gamma\le0$ and  $M\,\beta+2\,\gamma\le0$, we conclude as above.
\item If $\alpha>0$, $\beta>0$ and $\gamma\le0$, the boundedness from below is a direct consequence of Lemma~\ref{Lem:LogHLSlim} if  $M\,\beta-2\,\alpha\le0$.
\item If $\alpha >0$, $\gamma >0$ and $M\,\beta+2\,\gamma\ge0$, we notice that $\mathcal E[u]\ge\gamma\,\mathcal F_{\mathsf a,\mathsf b}\big[|u|^2\big]$ with $\mathsf a=2\,\alpha/\gamma$ and $\mathsf b=M\,\beta/\gamma$. The result of Lemma~\ref{Lem:LogHLSimproved} applies and the condition $\mathsf b<\min\{\mathsf a-1,2\,\mathsf a-2\}$ can be rewritten as $M\,\beta<\min\big\{2\,\alpha-\gamma,4\,\alpha-2\,\gamma\big\}$. The case $M\,\beta=4\,\alpha-2\,\gamma$ corresponds to $\mathsf b=2\,\mathsf a-2$ and it is covered by Lemmas~\ref{lemma1} and~\ref{lemma2}.
\item If $\alpha >0$, $\gamma >0$ and $M\,\beta+2\,\gamma<0$, we conclude by observing that
\[
\mathcal E[u]\ge\gamma\,\mathcal F_{\mathsf a,-2}\big[\,|u|^2\,\big]+\irtwo{|\nabla u|^2}+ \frac{2\,\pi}M\,(M\,\beta+2\,\gamma)\irtwo{|u|^2\,(-\Delta)^{-1}|u|^2}\,,
\]
where, because $M\,\beta+2\,\gamma<0$, the sum of the last two terms is bounded from below in view~\eqref{Ine-2} and where Lemma~\ref{Lem:LogHLSimproved} guarantees that $\mathcal F_{\mathsf a,-2}\big[\,|u|^2\,\big]$ is bounded from below.
\end{enumerate}
\nc\end{proof}

\noindent{\bf Acknowledgments:} Partial support through the French National Research Agency grant EFI ANR-17-CE40-0030 (J.D.), the US National Science Foundation grants DMS-1363432 and DMS-1954995 (R.L.F.) and the German Research Foundation DFG grant EXC-2111 – 390814868 (R.L.F.) is acknowledged. J.D.~and L.J.~address some special thanks to the organizers of the conference \emph{Nonlinear days in Alghero}, (September 16-20, 2019) where key results of this paper have been established.\\\noindent{\scriptsize\copyright\,2021 by the authors. This paper may be reproduced, in its entirety, for non-commercial purposes.}
\bibliographystyle{siamplain}\small
\bibliography{References}
\end{document}